\newtheorem{lemma}{Lemma}
\newtheorem{proposition}{Proposition}
\newtheorem{definition}{Definition}
\newcommand{\PRTRP}[1]{PRTRP}
\def \Re{\mathbb{R}}
\begin{document}
\title{Repair Crew Routing for Power Distribution Network Restoration}

\author{Bahar \c{C}avdar \thanks{bcavdar@tamu.edu} \and Qie He \thanks{qiehe01@gmail.com} \and Feng Qiu \thanks{fqiu@anl.gov}}
\date{%
    $^*$\small{Department of Engineering Technology and Industrial Distribution, Texas A\&M University, College Station, TX 77843}\\%
    $^\dagger$Department of Industrial and Systems Engineering, University of Minnesota, Minneapolis, MN 55455\\[2ex]%
    $^\ddagger$Argonne National Laboratory, Lemont, IL 60439
}

\maketitle
\begin{abstract}
{
As extreme weather events become more frequent and disruptive, service restoration is increasingly important for many infrastructures, e.g., power grids and  communication networks. In many studies on service restoration, the logistics issue of traveling over the road network, however, is often overlooked due to the complexity of considering multiple networks simultaneously, resulting in prolonged disruption time. In this work, we address such a problem arising in power systems, where technical crew and utility trucks travel to a number of sites to repair damaged equipment, with the goal of minimizing the total service disruption time within the service region. We call this problem the Power Restoration Traveling Repairman Problem (\PRTRP{}). What makes it significantly more challenging than a typical routing problem is that the service disruption time in a location depends on the interaction of the routing sequence with both networks, i.e., the road network and the power grid. To solve the problem, we develop an exact method based on bi-directional dynamic programming. We then improve the method by reducing the search space with solution upper and lower bounds, and threshold rules derived from the precedence relations in the power grid. We also propose efficient heuristic variants of the method. We present computational results and compare our method with benchmark heuristics.
}
\end{abstract}
\textbf{Keywords: Vehicle routing, Service restoration, Power distribution network, Precedence relations, Dynamic programming} 


\section{Introduction}
In recent years, extreme weather events and natural hazards, such as hurricanes, wildfires, and storms, have been causing a growing number of significant disruptions to various critical infrastructures. With severe physical damages, service restoration could take days, weeks, or even months, causing an enormous economic loss. Recent examples include weeks-long power outage in Florida in 2017 by Hurricane Irma \cite{WP} and months-long power outage in Puerto Rico in 2017 caused by Hurricane Maria \cite{NPR}. Poor logistics decisions could slow down restoration and prolong disruption time significantly. For instance, in electricity service restoration, technical crew and utility trucks need to travel to a number of sites to repair damaged equipment and install new equipment or perform safety inspection before the switch can be closed and service restored. Therefore, the route in the road network and the restoration activity in the power network are interdependent. Ideally, the restoration activities should be considered together with the logistics decision. However, since the service restoration itself is extremely complicated \cite{QIURestoration}, the logistics issue is often ignored or considered in an isolated setting, resulting in sub-optimal solutions with longer service disruption times.

In this work, we address service restoration with logistics considerations. More specifically, we focus on power system restoration with repair crew routing decisions. Our problem is to determine the route of a single repair crew to visit and repair the disrupted nodes on a power distribution network to ensure that power service is restored in a timely manner. While this problem is very similar to the Traveling Repairman Problem (TRP), it requires a new solution strategy due to a complicated objective function, which arises from the precedence relations imposed by the power network. In this paper, we first formulate this problem as a mixed-integer program (MIP). Solving this problem using off-the-shelf optimization solvers takes a long time for instances of small sizes. Therefore, we focus on developing a more efficient exact solution method based on bi-directional dynamic programming (BiDP). Our method is further strengthened by reducing the search space with structural results derived from the power and road networks. We also propose heuristic implementations of the BiDP approach to find solutions faster. With the heuristic approach, we find optimal solutions in almost all test instances with a significant reduction in computation time.

The rest of the paper is organized as follows. In Section \ref{Sec:Lit}, we review the related literature. In Section \ref{Sec:Def}, we formally introduce the problem and present the MIP formulation. In Section \ref{Sec:Method}, we describe the BiDP approach along with structural results to reduce the search space and primal heuristics. In Section \ref{Sec:Comp}, we present the computational results of both the exact and heuristic approaches. In Section \ref{Sec:Conc}, we summarize our contribution and present some future research directions.

\section{Literature Review}
\label{Sec:Lit}
Our work is related to network resilience, restoration of power networks, and routing problems.

Increasing frequency and impact of power service disruptions have attracted more attention to  resilience in power systems in recent years. \cite{jufri2019state} provides a review on power system resilience. They cover the terminology related to resilience, present a resilience framework, and discuss alternative courses of actions to improve resilience. In general, efforts to increase resilience can be divided into two phases: \emph{preparedness} and \emph{response}. Preparedness activities generally focus on increasing the strength of the underlying system and positioning the resources in anticipation of a disruption. Some examples of preparedness activities are selective undergrounding, physical upgrading, substation relocation and line rerouting, emergency generators and mobile substations, spare parts, and crew management \cite{jufri2019state}. \cite{binato2001new} and \cite{bienstock2007using} are two example studies focusing on preparedness through network design. The former considers a transmission network design problem, and the latter focuses on arc capacity increases. \cite{arab2015proactive} studies the prediction of the impact of a potential disaster and pre-disaster crew mobilization. Response actions, on the other hand, take place in the aftermath of a disruption. Most of the response strategies revolve around the management of repair crew to restore the disrupted parts on the network to ensure the power service is restored as quickly as possible. Our study mainly focuses on response strategies.

The literature on disaster recovery for power systems is relatively young, with most of the studies published after 2010 \cite{chen2018toward, tan2019scheduling}. These studies can be divided into two groups based on the part of the power network considered: distribution network restoration and transmission network restoration. Although the problems in both networks are similar due to the underlying precedence relations, the high voltage on the transmission network requires a precise model of the power flow problem. 
\cite{arif2017power} is one of the early works on addressing the routing issue of repair crew on distribution networks. They propose a two-stage model, where the first stage allocates damaged locations to clusters and the second stage determines the route in each cluster. The number of disruptions in their test instances goes up to 17. \cite{thiebaux2013planning} follows a different approach. In case of a disruption, they first solve the reconfiguration problem to ensure all nodes are supplied, and then solve the restoration problem to determine the order of repairs without considering the routing decisions. \cite{tan2019scheduling} emphasizes the difference between disruption restoration and blackouts restoration. Blackouts generally occur due to an instability in the power system, and the service restoration involves determining the sequence of switching actions to re-energize the demand nodes. On the other hand, disruptions caused by external factors such as natural disasters need to be attended by a repair crew. The authors study the distribution network repair and restoration problem, where travel time is ignored and a repair schedule is found with heuristics based on linear programming relaxation. \cite{chen2018toward} emphasizes the routing aspect of the distribution network restoration. They consider two groups of crews with different capabilities and propose an integer programming model to determine routes for the repair crew. 
To our knowledge, \cite{arif2018optimizing} is the only study considering the uncertainty in distribution network restoration. They assume that disruption locations are known, but travel times and demand are uncertain. In their two-stage model, they first determine the routes for the repair crew and then the reconfiguration scheme to re-energize the demand nodes.

In transmission network restoration, \cite{van2011vehicle} and \cite{simon2012randomized} 
focus on the last-mile disaster recovery for power restoration. They consider a disrupted transmission network to determine the schedules and routes for the repair crew to restore the power as quickly as possible. They define this problem as Power Restoration Vehicle Routing Problem. The authors solve the problem in three steps. In the first step, they determine the minimum set of disrupted items to repair. In the second step, they determine the order of restoration without considering routing. In the third step, they determine the routes for the repair crew considering the partial ordering imposed by the first step. 
\cite{coffrin2015transmission} studies the same problem with a focus on comparing the performances of two different power flow models, namely direct current and linear approximation of alternating current (LPAC). \cite{van2015transmission} considers the transmission network restoration problem using the LPAC model, with a similar formulation and solution methodology to that in \cite{coffrin2015transmission}. \cite{coffrin2012last} is the first study on the last-mile restoration problem considering the inter-dependency between the power and gas networks. They solve the problem in three steps and use a randomized adaptive decomposition method. They consider disruption scenarios with 10 to 120 damaged locations. 
\cite{arab2016electric} determines the optimal repair schedule, unit commitment, and system reconfiguration to restore disrupted power networks. They develop a MIP formulation and use Benders decomposition. Their test instances are based on IEEE-118 instance with seven damaged nodes. \cite{gao2020optimal} proposes a model to determine the schedule to repair the disrupted transmission network. They ignore the routing decision of the repair crew, and test their model on small instances with six disrupted nodes.

Our underlying problem is a routing problem. Considering the cumulative nature of our objective function, the Cumulative Capacitated Vehicle Routing Problem (CCVRP) and the TRP are the two most closely related problems. \cite{lysgaard2014branch} provides the first exact solution method for the CCVRP based on a branch-and-cut-and-price algorithm. \cite{ribeiro2012adaptive}, \cite{ngueveu2010effective}, and \cite{rivera2015multistart} present metaheuristics for the CCVRP based on large neighborhood search, a memetic algorithm, and multi-start local search, respectively. \cite{wu2004exact} proposes an exact solution method based on dynamic programming and branch and bound. They find solutions for instances with up to 23 nodes. For the TRP, \cite{luo2014branch} proposes an exact solution approach for the multiple TRP based on a branch-and-price-and-cut method. They provide solutions for instances up to 50 nodes. \cite{archer2008faster} develops a 7.18-approximation for the TRP. 

One study that is most closely related to ours is \cite{morshedlou2018work}, in which the joint routing and scheduling problem is considered for restoring an infrastructure network. They propose two MIP models, one for binary restoration and one for proportional restoration, and develop heuristics to solve the models. \cite{Ulusan2021-su} considers a disrupted road network and develops a solution methodology based on approximate dynamic programming. While this paper has similarities to ours in terms of solution methodology, we consider a power network in parallel to a road network. \cite{Goldbeck2020-si} also emphasizes the logistics aspects of the response strategies, but they consider logistics as a part of a higher level planning with three geographical areas. 

The current literature on crew routing for repairing infrastructure networks is still very limited in terms of understanding the impact of precedence relations. \cite{Suryawanshi2022-ze} addresses this gap in the literature in a recent review. In this study, we develop a new solution approach for the single crew routing problem to minimize the negative impact of infrastructure network disruptions. In the following section, we describe our problem in more detail and present the formulation.

\section{Problem Definition, Formulation, and Complexity}
\label{Sec:Def}

Before presenting the problem, we first provide background information on power distribution networks as an example of infrastructure networks with precedence relations. A typical power grid consists of power stations, substations, the transmission network, and the distribution network. 
The distribution network is considered to be the last mile of the power distribution. It delivers the low-voltage power to final customers. There are different configurations of power network topology such as radial, multi-ring, and mesh structure, offering different resilience, costs, and complexities to manage. The radial network configuration, where there are no close loops, is the most common configuration for distribution networks \cite{prakash2016review}. While it is the simplest and cheapest topology, if a node on the network is disrupted, all downstream nodes lose power service. Figure \ref{Fig:13Feeder} shows the IEEE 13 bus feeder instance as an example of a power distribution network. On this network, node 50 is the source. If there is a disruption on node 84 only, in addition to 84, nodes 52 and 911 lose power as well. 
\begin{figure}[H]
    \centering
    {{\includegraphics[width=7cm]{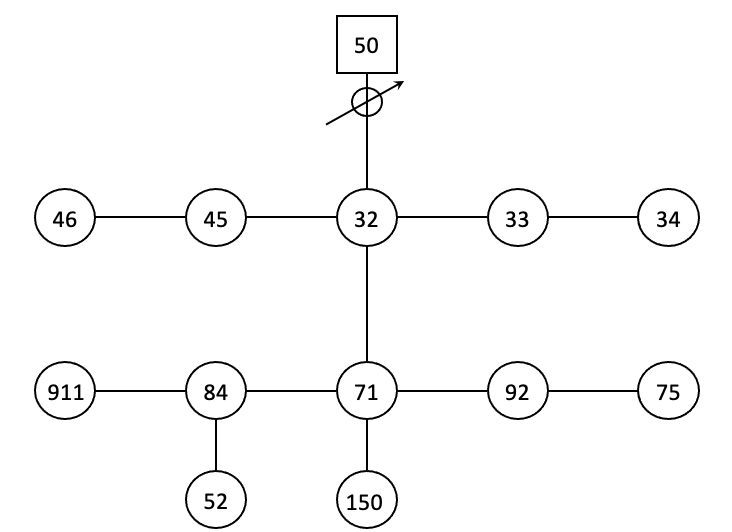} }}%
 \caption{The IEEE 13 bus feeder instance.}%
    \label{Fig:13Feeder}%
\end{figure}

We study how to efficiently restore a power distribution network where faults occur in some locations and result in outage for customers over the network. In this paper, we assume that faults happen only at the vertices on the power distribution network and their locations are known. The utility company sends out a repair crew to visit these fault locations sequentially and make repairs. The goal is to determine a sequence of fault locations to visit for the repair crew to minimize the total service disruption time for all customers.

\subsection{Problem definition}
There are two underlying networks in our problem: the road network on which the repair crew travels, and the power distribution network through which electricity is sent to customers. In our network model, we use a vertex to model both the intersection of two road segments in the road network and a fault location in the power network. We first introduce some notation for the two networks.

Let vertex 0 denote the depot where the repair crew is located before the repair starts. Let $V_c=\{1,2 \dots, n\}$ be the set of all fault locations (power substation or a customer location). All locations in $V_c$ receive power from the substation through distribution lines in the power distribution network, which is a directed acyclic graph. We consider the case in which there is only one substation in the network. For $i,j \in V_c$, an arc $(i,j)$ denotes that vertex $i$ is the immediate predecessor of vertex $j$ on the unique path from the power source to vertex $j$ in the power distribution network. The tree $T=(V_c, A_c)$ characterizes the precedence relations among all fault locations in the power network where the arc set $A_c$ is the collection of precedence relations on the power distribution network.
On the other hand, we model the road network as a complete directed graph $G=(V, A)$ where $V=V_c \cup \{0\}$ and each arc $(i,j)\in A$ represents a shortest path from location $i\in V$ to location $j\in V$. The travel time over arc $(i,j)$ is $d_{ij}$ for each $(i,j) \in A$.


The main complexity of our problem compared to a typical routing problem comes from how the service disruption time at a fault location is computed. Let vertex $1\in V_c$ denote the substation (the unique power source in the power network). For each vertex $i\in V_c$, there exists a unique path $P_{1i}$ in $T$ from the substation to vertex $i$. The power service at vertex $i$ can only be recovered when all faults on the path $P_{1i}$ are removed. Therefore, the service disruption time at vertex $i$ is the duration between the time the repair crew leaves the depot (vertex 0) and the time all faults on path $P_{1i}$ (including the fault at vertex $i$) are removed. Let $r_i$ denote the time it takes until the recovery of power service at vertex $i\in V_c$. Note that $r_i$ depends on both the topology of the power network $T$ as well as the sequence in which fault locations are visited in the road network $G$. Our goal is to determine a Hamiltonian cycle over vertices $V$ starting at vertex 0 to minimize the total service disruption time $\sum_{i\in V_c}r_i$.

\subsection{Formulation}
Before introducing the formulation of \PRTRP{}, we assume that the repair duration $p_i$ at each vertex $i\in V_c$ is 0. To see why this assumption is without loss of generality, we can transform the original problem into an equivalent \PRTRP{} with zero repair duration at each vertex, by changing two inputs: set the travel time $d'_{ji}$ in the new problem to be $d_{ji}+p_i$ for each incoming arc $(j,i)\in A$ in the road network and $p'_i=0$. This transformation does not change the objective value of any repair sequence and therefore creates an equivalent \PRTRP{} with no repair duration. We also introduce some notation to assist the calculation of the service disruption time at a vertex. Given two vertices $i$ and $j$ in $V_c$, let $i \succ j$ denote that vertex $i$ is a predecessor of vertex $j$ in the power network $T$, i.e., vertex $j$ is reachable from vertex $i$ through a directed path on $T$. If vertex $i$ does not receive power service, neither does vertex $j$. Let $i \succeq j$ denote that either $i \succ j$ or $i = j$. Given integers $i, j$ with $i \le j$, we use $[i:j]$ to denote the set of integers $\{i, i+1, \ldots, j\}$.

We now introduce the formulation of the problem. Let the binary decision variables $x_{ij}$ denote whether the repair crew travels from vertex $i$ to vertex $j$ for each arc $(i, j) \in A$, the continuous decision variables $t_i$ denote the arrival time at vertex $i$ for each $i \in V$, and the continuous decision variables $r_i$ denote the service disruption time of vertex $i$ for each $i \in V_c$. Then the \PRTRP{} can be formulated as a MIP as follows.
\begin{align} 
\min \; & \sum_{i \in V_c} r_i \label{eq_mipformulation_obj}\\
\text{s.t.} \; & \sum_{j \in V \setminus \{i\}}x_{ij} = \sum_{j \in V \setminus \{i\}} x_{ji} = 1,  & \forall i \in V, \label{eq_mipformulation_degrees_constraints}\\
& t_j \ge t_i + d_{ij} - M(1-x_{ij}),  &\forall j \in V_c, i\in V, \label{eq_mipformulation_arrival_time_constraints}\\
& r_j \ge t_i ,  &\forall j\in V_c, \forall i \succeq j, \label{eq_mipformulation_disruption_time_constraints} \\
& x_{ij} \in \{0,1\},  &\forall (i,j) \in A,\\
& t_j \ge 0,  &\forall j \in V_c,\\
& t_0 = 0.
\end{align}

The objective function in \eqref{eq_mipformulation_obj} calculates the total service disruption time at all fault locations. Constraints \eqref{eq_mipformulation_degrees_constraints} enforce that each faulty vertex on the distribution network should be visited exactly once. Constraints \eqref{eq_mipformulation_arrival_time_constraints} ensure that vertex $j$ will be visited after vertex $i$ if the arc $(i,j)$ is used. Constraints \eqref{eq_mipformulation_disruption_time_constraints} are the unique constraints compared to other routing problems. They ensure that vertex $j$ will only receive the power service after all its predecessors and itself in the power grid are visited and repaired. We need to set the value of $M$ in \eqref{eq_mipformulation_arrival_time_constraints} large enough for the formulation to be valid, for example setting $M = \sum_{(k,l)\in A}d_{kl}$. It is possible to choose smaller values for $M$. However, it is well known that this type of big-M formulations provide weak linear programming relaxations and take a long time to solve for large-size instances. 

\textbf{Complexity: }It is not surprising that the described problem is NP-hard.
\begin{proposition}
The \PRTRP{} is NP-hard.
\end{proposition}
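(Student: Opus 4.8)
The plan is to give a polynomial-time reduction from the Traveling Repairman Problem (the minimum latency problem), which is well known to be NP-hard, to the special case of \PRTRP{} in which the power network $T$ is a star rooted at the substation. Given a TRP instance with vertex set $\{1,\dots,n\}$, designated start vertex $1$, and travel times $c_{ij}$ --- the objective being to pick a visiting order starting at vertex $1$ that minimizes the sum, over all vertices, of the time at which the vertex is first reached --- I would build the following \PRTRP{} instance. Let the depot be a fresh vertex $0$, let $V_c=\{1,\dots,n\}$, let vertex $1$ be the substation, and let $A_c=\{(1,j):2\le j\le n\}$, so that $T$ is the star in which every fault location other than the substation is a direct successor of it. For the road network, set $d_{ij}:=c_{ij}$ for $i,j\in V_c$, $d_{0,1}:=0$, and $d_{0,j}:=M$ for every $j\ge 2$, where $M$ is a constant chosen large enough to dominate the total latency of every TRP tour --- for instance $M:=1+n^2\sum_{i,j}c_{ij}$, which has polynomially many bits; the arc lengths $d_{j,0}$ into the depot may be set arbitrarily (say to $0$), since they affect no $r_i$. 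The construction is clearly polynomial.

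The argument then rests on two claims. First, every optimal \PRTRP{} solution for the constructed instance travels directly from the depot to vertex $1$ on its first move: a feasible solution is a cycle $0\to\pi_1\to\cdots\to\pi_n\to 0$ over $V_c$, and if $\pi_1\ne 1$ then its first arc costs $M$, so the arrival time at \emph{every} fault location --- vertex $1$ included, since it is visited strictly after the depot --- is at least $M$; because $T$ is a star, $P_{1i}=\{1,i\}$ and hence $r_i=\max\{t_1,t_i\}\ge M$ for each successor $i$ of the substation, while $r_1=t_1\ge M$, so the objective is at least $nM$, whereas every solution that visits vertex $1$ first has objective below $M$ by the choice of $M$. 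Second, once the crew starts $0\to 1$ we have $t_1=0$, hence $r_1=0$ and $r_i=\max\{0,t_i\}=t_i$ for every $i\ge 2$; therefore the \PRTRP{} objective of a tour $0\to 1\to\pi_2\to\cdots\to\pi_n\to 0$ equals $\sum_{k=2}^{n}t_{\pi_k}$, which is precisely the TRP total latency of the order $1,\pi_2,\dots,\pi_n$. Hence the optimal \PRTRP{} value of the constructed instance equals the optimal TRP value, and stripping off the leading $0$ and the return arc turns an optimal \PRTRP{} tour into an optimal TRP order; as the reduction is polynomial and value-preserving, NP-hardness of \PRTRP{} follows.

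I do not expect a genuine obstacle --- this is essentially a bookkeeping argument --- but two points need care. The first is calibrating $M$: it must be large enough to force the $0\to 1$ start, so that $r_1$ vanishes and each $r_i$ collapses to the bare arrival time $t_i$, yet small enough to keep polynomial bit-length. The second is recognizing that it is precisely the star topology of $T$ that makes the \PRTRP{} objective reproduce the TRP latency objective \emph{exactly}: under a non-star precedence tree each $r_i$ would be a maximum of several arrival times, and the clean correspondence would break. (If one insisted that the $d_{ij}$ obey the triangle inequality, a minor modification --- replacing each expensive arc $(0,j)$ by a long gadget path, or adding a large uniform constant to all arc lengths and absorbing the induced constant offset in the objective --- restores metricity without affecting the argument; the \PRTRP{} as formulated places no such requirement on $d$.)
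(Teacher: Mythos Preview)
Your proposal is correct and takes essentially the same approach as the paper: reduce from the Traveling Repairman Problem by making the power network a star rooted at the substation, so that each $r_i$ collapses to $\max\{t_1,t_i\}$ and the \PRTRP{} objective matches the TRP latency. The only cosmetic difference is that the paper co-locates the depot with vertex~$1$ (implicitly $d_{0j}=d_{1j}$), whereas you introduce a large-$M$ penalty on the arcs $(0,j)$ for $j\neq 1$ to force the first move to the substation; your version makes the ``optimal tour visits vertex~$1$ first'' step explicit, but the underlying reduction is the same.
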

\begin{proof}
We will show a polynomial-time reduction from the NP-hard traveling repairman problem to the \PRTRP{}. Consider an instance of the traveling repairman problem over a graph $N = (V', A')$, where $V' = \{1, \ldots, n\}$ and the travel time over arc $(i,j) \in A'$ is $d_{ij}$. Construct an instance of \PRTRP{} in the following way. For the road network, set $V_c = V'$, put the depot 0 at the same location as vertex 1, and let the travel time from vertex $i$ to vertex $j$ be $d_{ij}$ for $(i,j) \in V_c$. For the power network, make vertex 1 the power source and each vertex $i \in V_c \setminus \{1\}$ the immediate successor of vertex 1; the power network is essentially a star with $n-1$ leaves. Finding an optimal solution to the traveling repairman problem over graph $N$ is equivalent to finding an optimal solution of the \PRTRP{} over the constructed road and power networks, and the transformation takes polynomial time.
\end{proof}


\section{Solution Methodology}
\label{Sec:Method}
To solve the \PRTRP{}, we develop an algorithm based on bi-directional dynamic programming (DP). Our algorithm performs backward and forward DP at the same time and uses the combined information to efficiently reduce the search space. It is also strengthened by pre-processing steps that are developed based on the precedence relations. Before presenting the details of our algorithm, we first introduce some definitions and new notation.

\begin{definition}
We call a path $P$ a return path if it starts at some vertex $i\in V_c$ and ends at vertex 0.
\end{definition}
\begin{definition}
We call a path $P$ an outgoing path if it starts at vertex 0 and ends at some vertex $i\in V_c$.
\end{definition}
Given a path $P$, let $V(P)$ denote the set of vertices on path $P$. Given two paths $P_1$ and $P_2$, let $P_1 \oplus P_2$ denote a path obtained by concatenating $P_2$ after $P_1$. For example, if $P_1 = (0, 3, 5, 4)$ and $P_2 = (1, 2)$, then $P_1 \oplus P_2 = (0, 3, 5, 4, 1, 2)$.
Assume that $P$ is a return path starting at vertex $i\in V_c$. Let $v^B(P)$ denote the service disruption time for all customers in $V_c$ between the time the vehicle arrives at vertex $i$ and the time the vehicle returns to vertex 0 following path $P$.
Given an outgoing path $P$ ending at vertex $i \in V_c$, let $u^F(P)$ denote the service disruption time for all customers between the time the vehicle leaves the depot and the time vehicle arrives at vertex $i$ following path $P$.
We introduce some key concepts and results that will be used in DP.

\begin{definition}
Two return paths $P_1$ and $P_2$ have the same configuration if $P_1$ and $P_2$ start at the same vertex and $V(P_1) = V(P_2)$. Similarly, two outgoing paths $P_1$ and $P_2$ have the same configuration if $P_1$ and $P_2$ end at the same vertex and $V(P_1) = V(P_2)$. 
\end{definition}

\begin{proposition}
If two return paths $P_1$ and $P_2$ have the same configuration and $v^B(P_1) < v^B(P_2)$, then no optimal Hamiltonian cycle of the \PRTRP{} ends with $P_2$.
\end{proposition}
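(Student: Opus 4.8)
The plan is to show that any Hamiltonian cycle ending with $P_2$ can be strictly improved by replacing its final segment $P_2$ with $P_1$; hence no optimal Hamiltonian cycle of the \PRTRP{} ends with $P_2$. Throughout, for a Hamiltonian cycle $H$ written starting and ending at vertex $0$, let $Z(H):=\sum_{j\in V_c}r_j$ denote its objective value.

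\textbf{Step 1 (the swap).} Suppose a Hamiltonian cycle $H$ ends with $P_2$. Let $Q$ be the part of $H$ preceding $P_2$, so $H=Q\oplus P_2$; here $Q$ is an outgoing path from $0$ to the start vertex $i\in V_c$ of $P_2$, with $V(Q)\cap V(P_2)=\{i\}$ and $V(Q)\cup V(P_2)=V$. Since $P_1$ and $P_2$ have the same configuration, $P_1$ also starts at $i$ and $V(P_1)=V(P_2)$; hence $V(Q)\cap V(P_1)=\{i\}$ and $V(Q)\cup V(P_1)=V$, so $Q\oplus P_1$ is again a Hamiltonian cycle starting at $0$. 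It suffices to prove $Z(Q\oplus P_1)<Z(Q\oplus P_2)$.

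\textbf{Step 2 (separating the objective).} Recall that in any repair sequence the disruption time of a customer $j$ is $r_j=\max\{t_k:k\succeq j\}$, the time at which the last fault on the source-to-$j$ path is removed. Fix $P\in\{P_1,P_2\}$ and traverse $Q\oplus P$. Put $A=V(Q)\cap V_c$ and $B=V(P)\setminus\{i\}$; then $A$ and $B$ partition $V_c$, and these sets are the same for $P_1$ and $P_2$. Every vertex of $A$ is visited at a time $\le t_i$, where $t_i$ is the arrival time at $i$ and is determined by $Q$ alone, whereas every vertex of $B$ is visited at a time $\ge t_i$. Split $V_c$ into two groups: if $\{k:k\succeq j\}\subseteq A$ then $r_j\le t_i$ and $r_j$ depends only on $Q$; if instead $j$ has an ancestor in $B$, then $r_j=\max\{t_k:k\succeq j,\ k\in B\}\ge t_i$, so $r_j-t_i$ is exactly the amount of disruption $j$ accrues while the vehicle travels the return path $P$ from $i$ back to $0$. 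Writing $c$ for the (configuration-determined) number of customers of the second type,
\[
Z(Q\oplus P)=\left(\sum_{j:\ \{k:k\succeq j\}\subseteq A} r_j\right)+c\,t_i+v^B(P),
\]
where the first two terms depend only on $Q$ and the third equals $v^B(P)$ by definition.

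\textbf{Step 3 (conclusion, and the hard part).} Applying this identity with $P=P_1$ and $P=P_2$ and subtracting, the $Q$-dependent terms cancel and $Z(Q\oplus P_2)-Z(Q\oplus P_1)=v^B(P_2)-v^B(P_1)>0$; thus $Q\oplus P_2$ is not optimal, and since $Q$ was arbitrary, no optimal Hamiltonian cycle ends with $P_2$. I expect the main obstacle to be Step 2 — specifically, justifying that once a customer $j$ has an ancestor on the return path, its recovery time equals $\max\{t_k:k\succeq j,\ k\in B\}$ and is therefore governed entirely by the return-path traversal. The reason is that every vertex of $B$ is visited no earlier than $t_i$, which already dominates all arrival times along $Q$, so the maximum over $j$'s ancestors is attained in $B$; this is what isolates the dependence of $Z$ on the return path inside the single term $v^B(P)$ (up to the $P$-independent offset $c\,t_i$). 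Once this separability is in hand, the remainder is bookkeeping, and no property of the road network beyond nonnegativity of travel times is used.
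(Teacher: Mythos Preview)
Your proof is correct and follows the same exchange argument as the paper: assume an optimal cycle ends with $P_2$, replace $P_2$ by $P_1$, and obtain a strictly better cycle. The paper's proof is a two-line sketch that asserts the swap yields a smaller objective without justification; your Step~2 supplies exactly the missing decomposition $Z(Q\oplus P)=(\text{terms depending only on }Q)+v^B(P)$ that makes this rigorous, but the underlying idea is identical.
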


\begin{proof}
Suppose that there exists an optimal Hamiltonian cycle $P$ that ends with $P_2$. Assume that $P=P_3\oplus P_2$. Then $P^'=P_3 \oplus P_1$ is a Hamiltonian cycle leading to a smaller total service disruption time than that of $P$, which contradicts the optimality of path $P$.
\end{proof}

\begin{proposition}
If two outgoing paths $P_1$ and $P_2$ have the same configuration and $u^F(P_1) < u^F(P_2)$, then no optimal Hamiltonian cycle of the \PRTRP{} starts with $P_2$.
\end{proposition}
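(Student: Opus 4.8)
The plan is to argue by contradiction with an exchange argument, in the spirit of the previous proposition but with one extra complication to handle. Suppose some optimal Hamiltonian cycle $P$ of the \PRTRP{} starts with $P_2$, and write $P = P_2 \oplus Q$, where $Q$ is the remainder of the route, running from the vertex immediately after $i$ back to the depot. Replace the prefix to form $P' = P_1 \oplus Q$; since $P_1$ also ends at $i$ and $V(P_1)=V(P_2)$, the cycle $P'$ is again a Hamiltonian cycle over $V$, and it suffices to show $\sum_{c\in V_c} r_c(P') < \sum_{c\in V_c} r_c(P)$, where $r_c(\cdot)$ denotes the service disruption time of customer $c$ under the given cycle.

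The point that is not present in the return-path case is that swapping $P_2$ for $P_1$ changes the arrival time at $i$, and hence shifts every visit time along $Q$ by the common amount $\Delta := t_i(P') - t_i(P)$; so one cannot simply say that the untouched part contributes the same as before. Instead I would decompose the objective around $t_i$. Partition $V_c$ into the set $A$ of customers $c$ for which $c$ together with all of its power ancestors $\{v : v\succeq c\}$ lies entirely in $V(P_2)=V(P_1)$, and its complement $B$; this partition depends only on the common vertex set and on the fixed tree $T$, hence is identical for $P$ and $P'$. For $c\in A$ power is restored within the outgoing segment, so $r_c\le t_i$, whereas for $c\in B$ some relevant vertex is visited inside $Q$, so $r_c\ge t_i$. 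Reading off the definition of $u^F$ as the total disruption accrued over the window $[0,t_i]$, one gets the identity
\[
\sum_{c\in V_c} r_c(P) \;=\; u^F(P_2)\;+\;\sum_{c\in B}\bigl(r_c(P)-t_i(P)\bigr),
\]
together with the analogous identity for $P'$ with $P_2$ replaced by $P_1$; intuitively, $u^F$ absorbs $\sum_{c\in A} r_c + |B|\,t_i$, the disruption up to the arrival at $i$, and the sum over $B$ records the disruption of the still-unrecovered customers after that moment.

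The key step is to check that $r_c(P')-t_i(P') = r_c(P)-t_i(P)$ for every $c\in B$. For such $c$, $r_c$ is the largest visit time among $\{c\}\cup\{v:v\succeq c\}$; at least one of these vertices sits in $Q$ with visit time at least $t_i$, while any of them lying in the outgoing segment has visit time at most $t_i$, so the maximum is attained at a vertex of $Q$ in both $P$ and $P'$. Since $Q$ is traversed identically in the two cycles up to the uniform shift $\Delta$, both that maximum and $t_i$ move by exactly $\Delta$, leaving their difference unchanged. Substituting this into the two decomposition identities yields $\sum_{c\in V_c} r_c(P') - \sum_{c\in V_c} r_c(P) = u^F(P_1) - u^F(P_2) < 0$, contradicting the optimality of $P$. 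I expect the only delicate part to be the bookkeeping behind the decomposition identity — in particular confirming that the vertex achieving $r_c$ for $c\in B$ stays in $Q$ when we pass from $P$ to $P'$, so that the shift is genuinely uniform; once that is in place the argument is routine.
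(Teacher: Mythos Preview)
Your proof is correct. The partition into $A$ and $B$, the decomposition identity $\sum_c r_c = u^F + \sum_{c\in B}(r_c-t_i)$, and the observation that for $c\in B$ the argmax defining $r_c$ lies in $Q$ under both $P$ and $P'$ (so that $r_c-t_i$ is invariant under the uniform shift $\Delta$) are all sound.

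That said, the paper's own proof is a two-line exchange argument that does not engage with the time shift at all: it simply asserts that $P_1\oplus P_3$ has smaller total disruption than $P_2\oplus P_3$. The reason this suffices, and the reason your ``extra complication'' is in fact a non-issue, is the incremental representation $T(H)=\sum_{p=1}^{n} c_p(H)\,d_{j_{p-1}j_p}$ (equation~\eqref{eq:disruption_time} in the paper). Here $c_p$ is the number of customers still without power just before the $p$th visit, which depends only on the \emph{set} of vertices visited so far and on the tree $T$, not on any absolute arrival time. Hence for $p>k$ both $c_p$ and $d_{j_{p-1}j_p}$ are identical under $P_1\oplus P_3$ and $P_2\oplus P_3$, so the tail contribution is literally the same number in both cycles and $T(P_1\oplus P_3)-T(P_2\oplus P_3)=u^F(P_1)-u^F(P_2)$ immediately. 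Your route via the $r_c$ values and the shift $\Delta$ reaches the same conclusion but does more bookkeeping; the incremental formulation makes the forward and backward cases genuinely symmetric, which is why the paper treats them with the same one-line argument.
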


\begin{proof}
Suppose that there exists an optimal Hamiltonian cycle $P$ that starts with $P_2$. Assume that $P=P_2 \oplus P_3$. Then $P^'=P_1 \oplus P_3$ is a Hamiltonian cycle leading to a smaller total service disruption time than that of $P$, which contradicts the optimality of $P$.
\end{proof}

In the BiDP algorithm, we construct outgoing paths and return paths simultaneously, and use additional bounding techniques to reduce the search space.

\subsection{Backward dynamic programming}
We denote the state vector of the backward DP by $(k, Y)$, where vertex $k$ is the current vertex, i.e., the current location of the repair crew, and $Y\subseteq V_c$ is the set of vertices that are not visited (repaired) yet. Note that $k \notin Y$. Let $v(k, Y)$ denote the minimum total service disruption time of all customers between the time the crew leaves vertex $k$ and the time the crew finishes removing all faults. Then we have the following Bellman equations.
\begin{align}
v(k, \emptyset) = 0, & \qquad \forall k \in V_c, \label{eq:backward:base} \\
v(k, Y) = \min_{j \in Y} \{W(V_c \setminus Y) d_{kj} + v(j, Y \setminus \{j\})\}  & \qquad Y \subseteq V_c, |Y| \ge 1, \label{eq:backward:inductive}
\end{align}
where $W: 2^{V} \mapsto \mathbb R_{> 0}$ and $W(S)$ computes the number of vertices whose power services are disrupted when the set of vertices in $S$ are repaired. Equation~\eqref{eq:backward:base} simply states that when the crew is at vertex $k$ and there is no vertices left to visit, then the minimum service disruption time of all customers between the time the crew leaves vertex $k$ and the time the crew finishes removing all faults is 0. Equation~\eqref{eq:backward:inductive} states that the minimum service disruption time of all customers between the time the crew leaves vertex $k$ and the time the crew finishes removing all faults equals to the smallest sum of two terms: the first term $W(V_c \setminus Y) d_{kj}$ denotes the additional service disruption time of all the vertices when the crew travels from vertex $k$ to vertex $j$, and the second term $v(j, Y \setminus \{j\})$ is the minimum service disruption time of all customers between the time the crew leaves vertex $j$ and the time the crew finishes removing all faults, for some vertex $j$.

Let $v^*$ be the minimum service disruption time among all Hamiltonian cycles. Then $v^*=v(0, V_c)$. A Hamiltonian cycle $(j_0 = 0, j_1, \dots, j_n, 0)$ (with distinct vertices $j_1, \ldots, j_n \in V_c$) is optimal if and only if
$$v(j_{p-1}, \{j_p, \dots, j_n\})=W( \{j_1, j_2, \dots, j_{p-1}\})d_{j_{p-1}, j_p}+ v(j_p, \{j_{p+1}, \dots, j_n\}) \quad \forall p \in [1:n].$$

\subsection{Forward dynamic programming}

We denote the state vector of the forward DP by $(X, k)$, where $X \subseteq V$ is the set of vertices that have been visited so far and vertex $k$ is the current location of the repair crew. Note that $k \in X$. Let $u(X,k)$ denote the minimum total service disruption time of all customers between the vehicle leaves the depot, visits all customers in $X$, and arrives at vertex $k$. We can write the Bellman equations for the forward DP as follows.
\begin{align}
    u(\{0\}, 0)=0, &  \label{eq:forward:base} \\
    u(X,i)= \min_{j \in X\setminus\{i\} } \{ u(X\setminus\{i\}, j) + W(X\setminus \{i\})d_{ji} \}, & \quad \{0\} \subsetneq X \subseteq V, i \in X \setminus\{0\}. \label{eq:forward:inductive}
\end{align}

Let $u^*$ be the minimum service disruption time among all Hamiltonian cycles. Then 
$u^*=\min_{i \in V_c}u(V, i)$. 
A Hamiltonian cycle $(j_0 = 0, j_1, \dots, j_n, 0)$ (with distinct vertices $j_1, \ldots, j_n \in V_c$) is optimal if and only if
\begin{align}
    u^*& = u(V, j_n),  \\
    u(\{j_0, j_1, \dots, j_{p+1}\}, j_{p+1})&= u(\{j_0, j_1, \dots, j_p\}, j_p) + W(\{j_0, j_1, \dots, j_p\})d_{j_p j_{p+1}}, p\in [0:n-1].
\end{align}

\subsection{Pre-processing and the maximum position of a vertex in optimal solutions}
In the pre-processing step, we determine the maximum position of a vertex in any optimal Hamiltonian cycle and use that to reduce the search space during BiDP.

We first introduce some notation that will be used in such bounds. Let $S_i$ be the set of successors of vertex $i$ on the power network including vertex $i$, i.e., $S_i=\{j \in V_c: i \succeq j\}$, and $|S_i|$ be the cardinality of set $S_i$. Let $s_p$ be the length of the $p^{th}$ shortest arc in the road network $G$. 
\begin{proposition}
\label{Pro:lower}
Given a vertex $i \in V_c$, for any $k> n -|S_i|$, the total service disruption time of any Hamiltonian cycle that visits vertex $i$ in the $k^{th}$ position is lower bounded by $L_i^k$, where
\begin{equation}
\label{Eq:Threshold}
    L_i^k=\sum_{\substack{p : p \leq n-|S_i| \\ p\geq k+1}}(n-p+1)s_p + |S_i|\sum_{p=n-|S_i|+1}^{k}s_p.
\end{equation}
 \end{proposition}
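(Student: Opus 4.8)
The plan is to rewrite the objective of an arbitrary feasible tour in the cumulative form that already underlies the forward recursion \eqref{eq:forward:base}--\eqref{eq:forward:inductive}, and then to bound each cumulative weight from below using both a trivial counting argument and the precedence structure forced once vertex $i$ sits in position $k$. Fix a Hamiltonian cycle $(j_0 = 0, j_1, \dots, j_n, 0)$ with distinct $j_1,\dots,j_n\in V_c$ and $j_k = i$. Unrolling \eqref{eq:forward:inductive} (equivalently, integrating the number of still-disrupted customers over time), the total service disruption time of this tour equals $\sum_{q=1}^{n} W_q\, d_{j_{q-1} j_q}$, where $W_q := W(\{j_1, \dots, j_{q-1}\})$ is the number of customers without power immediately before the $q$-th repair, so in particular $W_1 = W(\emptyset) = n$.

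First I would record two lower bounds on $W_q$. Since at most $q-1$ customers can be back in service after $q-1$ repairs, $W_q \ge n-q+1$ for every $q \in [1:n]$. Moreover, for $q \le k$ the vertex $i$ is not among $\{j_1, \dots, j_{q-1}\}$, and every vertex of $S_i$ lies downstream of $i$ in the power tree $T$; hence none of the $|S_i|$ customers in $S_i$ has power, i.e., $W_q \ge |S_i|$ for all $q \le k$. Combining the two, $W_q \ge c_q$, where $c_q := \max\{n-q+1,\,|S_i|\}$ for $q \le k$ and $c_q := n-q+1$ for $q > k$. The hypothesis $k > n-|S_i|$ is exactly what makes $(c_q)_{q=1}^{n}$ non-increasing: it runs $n, n-1, \dots, |S_i|+1$ over $q \le n-|S_i|$, then stays at $|S_i|$ over $n-|S_i| < q \le k$, and finally continues $n-k, n-k-1, \dots, 1$, whose entries are all strictly below $|S_i|$.

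The final ingredient is an exchange argument on the arc lengths. The $n$ arcs $(j_{q-1}, j_q)$, $q\in[1:n]$, are pairwise distinct arcs of $G$, so the $q$-th smallest of their lengths is at least $s_q$. Since the $c_q$ are non-negative and non-increasing, the rearrangement inequality (pairing the largest coefficients with the smallest arc lengths) yields
\[
\sum_{q=1}^{n} W_q\, d_{j_{q-1} j_q} \;\ge\; \sum_{q=1}^{n} c_q\, d_{j_{q-1} j_q} \;\ge\; \sum_{q=1}^{n} c_q\, s_q .
\]
Splitting $[1:n]$ into the three blocks $[1:n-|S_i|]$, $[n-|S_i|+1:k]$, $[k+1:n]$ — a genuine partition precisely because $k>n-|S_i|$ — and inserting the value of $c_q$ on each block shows the right-hand side equals $L_i^k$, which finishes the proof. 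I expect the two delicate points to be the claim that the $q$-th order statistic of the tour's $n$ distinct arc lengths dominates $s_q$ and the correct (opposite-order) pairing in the rearrangement step; everything else is routine bookkeeping.
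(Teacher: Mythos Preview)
Your argument is correct and follows essentially the same route as the paper: rewrite $T(H)$ in the cumulative form $\sum_q W_q\,d_{j_{q-1}j_q}$, bound each $W_q$ below by $c_q=\max\{n-q+1,|S_i|\}$ on $q\le k$ and by $n-q+1$ on $q>k$, and then apply the rearrangement inequality together with the fact that the $q$-th order statistic of $n$ distinct arc lengths dominates $s_q$. Your exposition of the lower bound on $W_q$ via the $\max$ is in fact cleaner than the paper's prose description of the vector $\bar C$.
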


The proof of Proposition~\ref{Pro:lower} is given in Appendix \ref{App:lower}. We can use the lower bound associated with vertex $i$ in Proposition~\ref{Pro:lower} to bound the maximum position of the vertex in an optimal solution.
 


\begin{proposition}
\label{Prop:Threshold}
Let $U$ be an upper bound for the optimal objective value of the \PRTRP{}. Given a vertex $i \in V_c$ and some position $k > n - |S_i|$, if the lower bound $L_i^k$ defined in~\eqref{Eq:Threshold} satisfies that $L_i^k > U$, then any optimal Hamiltonian cycle for the \PRTRP{} will not visit vertex $i$ at the $k^{th}$ position or later.
\end{proposition}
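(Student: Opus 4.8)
The plan is to derive the claim directly from Proposition~\ref{Pro:lower} together with a short monotonicity argument showing that the family of bounds $L_i^k$ is nondecreasing in $k$ over the relevant range $k > n - |S_i|$. So the two ingredients are: (i) any cycle that places $i$ in position $k'$ has objective at least $L_i^{k'}$, and (ii) $L_i^{k'} \ge L_i^{k}$ for $k' \ge k$; combined with $L_i^k > U \ge v^*$ this rules out optimality.

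First I would dispose of the case where vertex $i$ is visited in exactly the $k$-th position. By Proposition~\ref{Pro:lower}, any Hamiltonian cycle visiting $i$ in position $k$ has total service disruption time at least $L_i^k$. Since $L_i^k > U$ and $U$ bounds the optimal value $v^*$ from above, such a cycle has objective strictly larger than $v^*$ and hence cannot be optimal.

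Next I would extend this to every position $k' \ge k$. It suffices to show $L_i^{k'} \ge L_i^k$ whenever $k' \ge k > n - |S_i|$: then a cycle visiting $i$ in position $k'$ has objective at least $L_i^{k'} \ge L_i^k > U \ge v^*$, where Proposition~\ref{Pro:lower} is legitimately invoked because $k' > n - |S_i|$ as well. Monotonicity I would get by inspecting $L_i^{k+1} - L_i^k$ in~\eqref{Eq:Threshold}: when $k > n - |S_i|$, the first summation (over $p$ with $k+1 \le p \le n - |S_i|$) is vacuous for both $L_i^k$ and $L_i^{k+1}$, so the difference collapses to $|S_i|\, s_{k+1} \ge 0$, using nonnegativity of the arc lengths $s_p$; iterating yields $L_i^{k'} \ge L_i^k$ for all $k' \ge k$.

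The argument is essentially routine, and I expect the only delicate point to be the bookkeeping on the two summations in~\eqref{Eq:Threshold}: one must verify that under the hypothesis $k > n - |S_i|$ the first sum never contributes, so that incrementing $k$ only adds the nonnegative term $|S_i| s_{k+1}$ and never deletes a (positive) term from the first sum — otherwise $L_i^k$ need not be monotone. One should also double-check that Proposition~\ref{Pro:lower} applies at each position $k' \ge k$, which it does since $k' \ge k > n - |S_i|$.
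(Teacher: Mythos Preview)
Your approach matches the paper's: invoke Proposition~\ref{Pro:lower} at position $k$, then propagate to later positions by showing $L_i^k$ is nondecreasing in $k$ on the range $k > n - |S_i|$. The only slip is in the bookkeeping you yourself flagged as delicate: the two stacked conditions under the first sum in~\eqref{Eq:Threshold} are meant \emph{disjunctively} (i.e., $p \le n - |S_i|$ \emph{or} $p \ge k+1$), as is clear from the derivation $L_i^k = \bar C \cdot \bar D$ in the proof of Proposition~\ref{Pro:lower}, so that sum is not vacuous. Incrementing $k$ therefore transfers the $p = k+1$ term from the first sum (coefficient $n-k$) to the second (coefficient $|S_i|$), giving the paper's difference $L_i^{k+1} - L_i^k = (|S_i| + k - n)\, s_{k+1}$ rather than your $|S_i|\,s_{k+1}$; this is still nonnegative precisely because $k > n - |S_i|$, so your monotonicity conclusion and the proposition stand.
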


\begin{proof}
For the lower bound $L_i^k$ defined in~\eqref{Eq:Threshold}, if $L_i^k > U$, then it is not optimal to visit vertex $i$ in the $k^{th}$ position of a Hamiltonian cycle. 
To prove the proposition, it suffices to show that for a given vertex $i$, $L_i^k$ is monotonically non-decreasing in $k$. According to~\eqref{Eq:Threshold} , $L_i^{k+1}-L_i^{k}=|S_i|s_{k+1} - (n-k-1+1)s_{k+1} = (|S_i|+k-n) s_{k+1}$. Since $k > n-|S_i|$, $L_i^{k+1} > L_i^{k}$. By induction on $k$, $L_i^{k'}>L_i^{k}$ for each $k'>k$. Therefore, if it is suboptimal to visit vertex $i$ on the $k^{th}$ position, it is also suboptimal to visit it on later positions.
\end{proof}

The monotonicity of the lower bound $L_i^k$ in Equation (\ref{Eq:Threshold}) is due to the condition that $k > n-|S_i|$. Since $|S_i|> n-k$, some of the vertices visited before the $k^{th}$ vertex are successors of vertex $i$. These vertices cannot receive power before $i$ is recovered, so delaying the repair of $i$ delays their service recovery, thus increasing the associated lower bound. Proposition \ref{Prop:Threshold} implies that for vertex $i \in V_c$, if we find a position $k$  that satisfies $L_i^{k} > U$, then any solution where vertex $i$ is visited on $k$ or later could be removed from the search space of the dynamic programming. 

\subsection{Heuristics bounds}
\subsubsection{Lower bounds of solutions extended from a given path}
We derive lower bounds for the objective value of Hamiltonian cycles that can be extended from an outgoing or return path. These lower bounds will be used to reduce the search space during the iterations of DP. Recall that $s_p$ denotes the length of the $p^{th}$ shortest arc in the road network $G$. 

\begin{proposition}
\label{prop:lowerbound_outgoing}
Given an outgoing path $P$ that contains $k$ vertices in $V_c$, let $w_P$ be the number of vertices without power service after the crew travels through $P$. Then the total service disruption time of any Hamiltonian cycle extended from $P$ is lower bounded by
\[
u^F(P) + w_Ps_1 + \sum_{p=2}^{n-k} (n-k+1-p)s_p.
\]
\end{proposition}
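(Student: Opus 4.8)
The plan is to split the total service disruption time of an arbitrary Hamiltonian cycle $H$ extended from $P$ into the disruption accumulated \emph{during} the traversal of $P$ and the disruption accumulated \emph{after} it. By the definition of $u^F(P)$, the first piece equals $u^F(P)$ exactly, so it remains to bound the second piece from below, uniformly over all ways of completing $P$ into a Hamiltonian cycle.

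For the second piece, I would let $\ell_1,\ell_2,\dots,\ell_{n-k}$ be the $n-k$ vertices of $V_c$ not on $P$, listed in the order $H$ visits them after $P$, and let $d_q$ be the travel time of the arc by which the crew reaches $\ell_q$ (so the arc for $d_1$ leaves the last vertex of $P$, and the arc for $d_q$ leaves $\ell_{q-1}$ when $q\ge 2$). The arc that finally returns to the depot contributes nothing, because once every fault in $V_c$ has been repaired every customer has power. While the crew traverses the arc of length $d_q$, every still-unrepaired vertex is without power; in particular $\ell_q,\ell_{q+1},\dots,\ell_{n-k}$ are all disrupted, giving a count of at least $n-k-q+1$, and for $q=1$ the count is exactly $w_P$, which satisfies $w_P\ge n-k$ since the $n-k$ unvisited vertices are disrupted. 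Hence the post-$P$ disruption is at least $w_P d_1+\sum_{q=2}^{n-k}(n-k-q+1)\,d_q$.

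Next I would minimize this weighted sum over all admissible arc selections. Because a Hamiltonian cycle uses pairwise distinct arcs, $d_1,\dots,d_{n-k}$ are $n-k$ distinct arcs of $G$, and when they are sorted in increasing order the $p$-th one has length at least $s_p$ (the $p$ smallest among them are $p$ arcs of $G$ no longer than it, so the $p$-th shortest arc of $G$ is no longer than it). The multiset of coefficients is $\{\,w_P,\ n-k-1,\ n-k-2,\ \dots,\ 1\,\}$, whose strictly largest element is $w_P$ thanks to $w_P\ge n-k$; so by the rearrangement inequality the sum is smallest when the coefficients in decreasing order are matched with the chosen arc lengths in increasing order, and since all coefficients are nonnegative this quantity is at least $w_P s_1+\sum_{p=2}^{n-k}(n-k+1-p)\,s_p$. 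Adding back $u^F(P)$ gives the stated lower bound.

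The two inequalities used — the ``disrupted count is at least the unvisited count'' estimate and the rearrangement/sorting estimate — are routine. The part that needs care is the bookkeeping: checking that exactly $n-k$ post-$P$ arcs contribute, that the first of them carries coefficient $w_P$ rather than $n-k$, and that $w_P\ge n-k$ so that $w_P$ is indeed the top coefficient and is paired with the shortest arc $s_1$. I would also verify the boundary case $k=n$, where $w_P=0$, the sum is empty, and the bound correctly collapses to $u^F(P)$.
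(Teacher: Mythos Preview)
Your argument is correct and follows essentially the same route as the paper: decompose $T(H)$ as $u^F(P)$ plus the post-$P$ contribution, lower bound the per-arc disruption counts by $(w_P,\,n-k-1,\,\ldots,\,1)$, apply the Rearrangement Inequality (using that this coefficient vector is non-increasing since $w_P\ge n-k$), and then replace the sorted post-$P$ arc lengths by $(s_1,\ldots,s_{n-k})$. The only cosmetic difference is that the paper applies rearrangement to the actual count vector $C$ first (using that $C$ is non-increasing) and then passes to the component-wise lower bound $\bar C$, whereas you pass to $\bar C$ first and then rearrange; both orders are valid and yield the same bound.
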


\begin{proposition}
\label{prop:lowerbound_return}
Given a return path $P$ that contains $k$ vertices in $V_c$, the total service disruption time of any Hamiltonian cycle extended from $P$ is lower bounded by
\[
\sum_{p=1}^{n-k+1}(n+1-p) s_p + v^B(P).
\]
\end{proposition}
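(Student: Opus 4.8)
The plan is to split the disruption time of any Hamiltonian cycle $H$ extended from $P$ into the amount accrued before the crew begins traversing $P$ and the amount accrued while traversing $P$, and then to bound the first part from below. Write $H = P' \oplus P$, where $P'$ is an outgoing path from the depot that visits exactly the $n-k$ vertices of $V_c \setminus V(P)$; every Hamiltonian cycle extended from $P$ has this form. The disruption accrued while traversing an arc of length $d$ equals $d$ times the number of vertices whose power is still disrupted at that moment (this is the quantity appearing in the Bellman recursions), and the objective is the sum of these contributions over all arcs of $H$. At the moment the crew arrives at the first vertex of $P$, the set of repaired vertices is exactly $V_c \setminus V(P)$, independent of the order in which $P'$ was traversed; hence the contributions of the arcs of $P$ sum to exactly $v^B(P)$ by its definition. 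The arcs traversed strictly before the arrival at the first vertex of $P$ are the $n-k$ arcs internal to $P'$ together with the single arc joining the last vertex of $P'$ to the first vertex of $P$, i.e., exactly $n-k+1$ arcs. So it suffices to show that the total contribution of these $n-k+1$ arcs is at least $\sum_{p=1}^{n-k+1}(n+1-p)s_p$.

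Label these arcs $e_1, \ldots, e_{n-k+1}$ in the order the crew traverses them, and let $\ell_q$ denote the length of $e_q$. When the crew traverses $e_q$ it has so far visited and repaired exactly $q-1$ vertices of $V_c$. A vertex of $V_c$ receives power only if it and all of its predecessors in $T$ have been repaired, so the set of currently powered vertices is contained in the set of repaired vertices; therefore at most $q-1$ vertices are powered and at least $n-(q-1) = n-q+1$ vertices are still disrupted when $e_q$ is traversed. The contribution of $e_q$ is thus at least $(n-q+1)\ell_q$, and the total contribution of these arcs is at least $\sum_{q=1}^{n-k+1}(n-q+1)\ell_q$.

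Finally I would apply a rearrangement argument. Because a Hamiltonian cycle repeats no vertex, the arcs $e_1, \ldots, e_{n-k+1}$ are pairwise distinct arcs of $G$, so $\ell_1, \ldots, \ell_{n-k+1}$ are the lengths of $n-k+1$ distinct arcs and in particular the $q$-th smallest of these lengths is at least $s_q$. Since the weights $n-q+1$ are strictly decreasing in $q$, the weighted sum $\sum_{q=1}^{n-k+1}(n-q+1)\ell_q$ is minimized, over all assignments of these lengths to the weights, by pairing the largest weight with the smallest length; combining this with the order-statistic bound gives $\sum_{q=1}^{n-k+1}(n-q+1)\ell_q \ge \sum_{q=1}^{n-k+1}(n-q+1)s_q = \sum_{p=1}^{n-k+1}(n+1-p)s_p$. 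Adding $v^B(P)$ proves the bound. Every step is routine; the one place that requires care is the accounting of which arc carries the transition into $P$ and the observation that the repaired set upon entering $P$ depends on $P$ alone, both of which follow directly from the definitions. Proposition~\ref{prop:lowerbound_outgoing} is proved in the same way with the roles of prefix and suffix exchanged, using the exact count $w_P$ of vertices still disrupted at the end of the outgoing path in place of the coarser per-arc bound used here.
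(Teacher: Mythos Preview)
Your proof is correct and follows essentially the same approach as the paper: decompose $T(H)$ as the contribution of the $n-k+1$ arcs leading up to the first vertex of $P$ plus $v^B(P)$, bound the number of still-disrupted vertices on the $q$-th arc by $n-q+1$, and then combine the Rearrangement Inequality with the order-statistic bound $\ell_{(q)}\ge s_q$. The paper packages the last two steps as its Lemma~\ref{lemma:lowerbounding}, but the content is identical to what you wrote.
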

%
The proofs of the propositions above are in Appendices \ref{App:lowerbound_outgoing} and \ref{App:lowerbound_return} respectively.

\subsubsection{Upper bounds from heuristics}
\label{Sec:primal_heuristic}
To create initial solutions and upper bounds for optimal solutions, we use two greedy heuristics. The heuristics use different criteria based on the distance and the priority of vertices on the power distribution network. The heuristics work as follow.
\begin{itemize}
    \item \emph{Greedy in distance (GiD):} Always go to the nearest unvisited location.
    \item \emph{Greedy in priority-weighted distance (GiPD):} Given the current location $i$, go the the unvisited vertex $j$ with the smallest $d_{ij}/|S_j|$ value.
\end{itemize}

\subsection{Bidirectional dynamic programming}
\label{Sec:BiDP}
 We present our bi-directional DP-based solution method in Algorithm \ref{alg:bidirection}. It starts with calculating initial feasible solutions and upper bounds, and the maximum position for each vertex in a Hamiltonian cycle. Then we perform the backward and forward DP simultaneously. For each outgoing path or return path, we calculate a lower bound following the ideas in Propositions \ref{prop:lowerbound_outgoing} and \ref{prop:lowerbound_return} to determine whether to accept this path or not. We also update the upper bound periodically by completing the outgoing paths greedily.

\begin{algorithm}[H]
\caption{A bidirectional dynamic programming algorithm for the \PRTRP{}}
\label{alg:bidirection}
\begin{algorithmic}[1]
\Require The road network $G=(V, A)$, power network $T=(V_c, A_c)$, travel time $d_{ij}$ for $i,j \in V$.
\Ensure A Hamiltonian cycle with the minimum total service disruption time.
\State (Pre-processing) For each $i \in V_c$, calculate the maximum position $\beta_i$ for vertex $i$ in the optimal Hamiltonian cycle. Compute an upper bound $U$ with the heuristics discussed in Section~\ref{Sec:primal_heuristic}.
\State (Initialization) Set $\Gamma^F_0 = \Gamma^B_0=\{(0)\}$ and $\Gamma^F_l = \Gamma^B_l= \emptyset$ for $l=1,\ldots, \lceil\frac{n}{2}\rceil+1$. 
\For {$l=0$ to $\lceil\frac{n}{2} \rceil$}  \Comment{Path extension}
	\For {each return path $P$ in $\Gamma^B_l$}
		\For {each vertex $i \in V_c \setminus V(P)$ such that $n-l \le \beta_i$} \label{alg:bid:backext}
			\State Generate a path $P' = \{i\} \oplus P$. 
			\State Compute $v^B(P')$.
			\State Compute a lower bound $LB(P')$ for the return path $P'$ according to Proposition~\ref{prop:lowerbound_return}.
			\If{$U\geq LB(P')$ }
			\State $\Gamma^B_{l+1} \gets \texttt{BackwardUpdate}(\Gamma^B_{l+1}, P')$.
			\EndIf
		\EndFor
	\EndFor
	\For {each outgoing path $P$ in $\Gamma^F_l$}
		\For {each vertex $i \in V_c \setminus V(P)$ such that $l \le \beta_i$} \label{alg:bid:forext}
			\State Generate a path $P' = P \oplus \{i\}$. 
			\State Compute $u^F(P')$.
			\State Compute a lower bound $LB(P')$ for the outgoing path $P'$ according to Proposition~\ref{prop:lowerbound_outgoing}.
			\If{$U\geq LB(P')$ }
			\State $\Gamma^F_{l+1} \gets \texttt{ForwardUpdate}(\Gamma^F_{l+1}, P')$.
			\EndIf
		\EndFor
	\EndFor
	\State Update upper bound $U$ 
\EndFor
\For{each path $P$ in $\Gamma^B_{\lceil\frac{n}{2}\rceil}$ (if $n$ is odd) or $\Gamma^B_{\lceil\frac{n}{2}\rceil+1}$ (if $n$ is even)} 
	\For{each path $P'$ in $\Gamma^F_{\lceil\frac{n}{2} \rceil}$}
		\If{$P' \oplus P$ is a Hamiltonian cycle and $u^F(P') + v^B(P) < U$}
		  \State $U \gets u^F(P') + v^B(P)$ and store $P' \oplus P$.
		\EndIf
	\EndFor
\EndFor
\State \Return $U$ and the associated Hamiltonian cycle.
\end{algorithmic}
\end{algorithm}

\pagebreak
The subroutines for $\texttt{BackwardUpdate}(S, P)$ and $\texttt{ForwardUpdate}(S, P)$ work as follows.
\begin{algorithm}[h]
\caption{The subroutine $\texttt{BackwardUpdate}(S, P)$}
\begin{algorithmic}[1]
\Require A set $S$ of return paths and a return path $P$.
\Ensure An updated set $S$.
\If {there is no return path $P' \in S$ with the same configuration as $P$}
\State Add $P$ to $S$
\Else
\State Let path $P' \in S$ be the return path with the same configuration as $P$
    \If{$v^B(P) < v^B(P')$}
    \State Delete $P'$ from $S$ and add $P$ into $S$
    \EndIf
\EndIf
\State Return $S$.
\end{algorithmic}
\end{algorithm}

\begin{algorithm}[h]
\caption{The subroutine $\texttt{ForwardUpdate}(S, P)$}
\label{Alg:ForSub}
\begin{algorithmic}[1]
\Require A set $S$ of outgoing paths and an outgoing path $P$.
\Ensure An updated set $S$.
\If {there is no outgoing path $P' \in S$ with the same configuration as $P$}
\State Add $P$ to $S$
\Else
\State Let path $P' \in S$ be the outgoing path with the same configuration as $P$
    \If{$u^F(P) < u^F(P')$}
    \State Delete $P'$ from $S$ and add $P$ into $S$
    \EndIf
\EndIf
\State Return $S$.
\end{algorithmic}
\end{algorithm}


\section{Computational Experiments}
\label{Sec:Comp}
In this section, we present computational results of our solution method and compare it with benchmarks. The test instances are created from the Electric Power Research Institute (EPRI) instances, which are based on a U.S. power distribution network \cite{CKT5}. We use the $ckt5$ instance as shown in Figure \ref{Fig:ckt5}. The instance $ckt5$ represents the power distribution network, and we assume the road network is a complete graph. To create test instances over a power network of a smaller size, we take subtrees from the original power network. In our naming convention, $ckt5\_x$ represents the subtree of the original power network rooted at node $x$, i.e., node $x$ becomes the power source of the new instance. We assume that a unit distance can be traveled in a second, and this can be adjusted for the speed of travel.
\begin{figure}[h]
\centering
\includegraphics[scale=0.5]{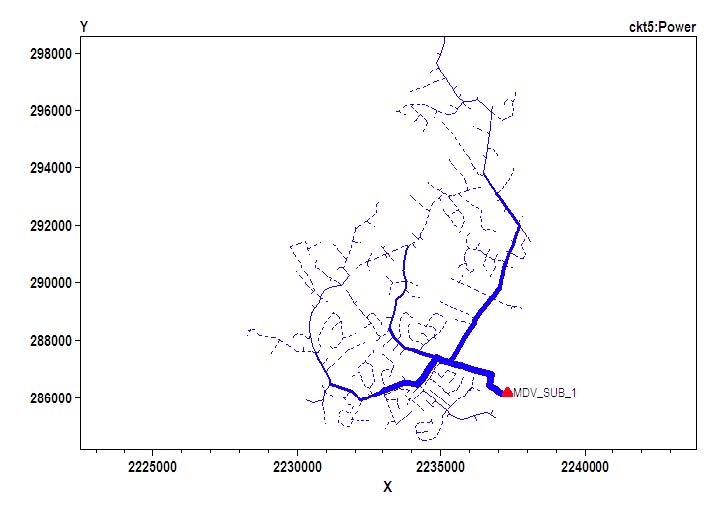}
\caption{The $ckt5$ instance by EPRI.}
\label{Fig:ckt5}
\end{figure}

We compare the optimal results obtained by our BiDP with the benchmark heuristics GiD and GiPD introduced in Section~\ref{Sec:primal_heuristic}. We omit the results obtained by the commercial optimization solver CPLEX since only very small-sized instances can be solved to optimality within the given time limit. All algorithms are implemented in the C programming language, and the experiments are performed on a computer that has a 2.8 GHz Intel i7 CPU with 4 cores and 16 GB of RAM and runs the macOS operating system. In the computational results, we report the resulting total service disruption time ($z$) and the computation time ($t$) in seconds for each algorithm. 
We omit the computation time for the benchmark heuristics since they are very quick to find a solution (in less than a second). 



\subsection*{Exact solutions by BiDP}
We present solutions found by our BiDP algorithm and benchmark heuristics in Table \ref{Tab:OptRes}.  Our BiDP algorithm finds optimal solutions for instances with fewer than 20 vertices in less than 165 seconds. GiD and GiPD deviate from the optimal objective by 27\% and 22\% on average, while the maximum deviation goes up to 50\% and 62\% respectively. The BiDP shows a clear advantage over the two heuristics in terms of solution quality. This could be weighed against the computation time in practice.

\begin{table}[H]
\caption{Solutions found by BiDP and benchmark heuristics ($z$ is in millions).} 
\centering 
\scalefont{0.8}
\begin{tabular}{l r r r c r}
\hline\hline 
Instance &$n$ & $z_{GiD}$ & $z_{GiPD}$ & $z_{BiDP}(opt)$ & $t_{BiDP}(opt)$ \\ [0.5ex]
\hline 
$ckt5\_5$	&	13	&	26.30	&	29.63	&	24.81	&	0.84	\\
$ckt5\_630$	&	13	&	27.58	&	27.30	&	26.55	&	1.26	\\
$ckt5\_520$	&	13	&	26.95	&	26.40	&	22.34	&	1.28	\\
$ckt5\_445$	&	15	&	27.90	&	33.02	&	23.00	&	2.86	\\
$ckt5\_513$	&	15	&	36.18	&	32.54	&	28.87	&	3.61	\\
$ckt5\_87$	&	15	&	34.82	&	38.87	&	30.88	&	6.00	\\
$ckt5\_135$	&	15	&	34.34	&	36.05	&	27.97	&	5.36	\\
$ckt5\_375$	&	17	&	52.51	&	48.33	&	40.21	&	45.37	\\
$ckt5\_144$	&	18	&	53.00	&	42.01	&	39.87	&	18.18	\\
$ckt5\_559$	&	18	&	51.36	&	37.84	&	36.28	&	118.41	\\
$ckt5\_820$	&	18	&	50.88	&	58.01	&	35.80	&	164.15	\\
$ckt5\_376$	&	19	&	49.15	&	44.18	&	40.27	&	97.62	\\
$ckt5\_299$	&	20	&	53.28	&	50.50	&	38.84	&	1647.36	\\
$ckt5\_361$	&	21	&	44.48	&	54.87	&	38.10	&	199.69	\\
$ckt5\_959$	&	21	&	67.70	&	62.33	&	50.88	&	715.72	\\
$ckt5\_732$	&	21	&	46.26	&	46.23	&	40.14	&	704.39	\\
$ckt5\_805$	&	21	&	59.14	&	51.67	&	50.57	&	3459.41	\\
$ckt5\_477$	&	22	&	63.83	&	57.51	&	45.81	&	1850.63	\\
$ckt5\_742$	&	22	&	78.63	&	78.37	&	53.89	&	2947.84	\\
$ckt5\_285$	&	23	&	66.99	&	55.93	&	44.72	&	4609.57	\\
$ckt5\_41$	&	23	&	61.70	&	54.54	&	50.05	&	15020.91	\\\hline
\multicolumn{2}{l}{Avg. Deviation} & 27\% & 22\% & &\\
\multicolumn{2}{l}{Min. Deviation} & 4\% & 2\% & &\\
\multicolumn{2}{l}{Max. Deviation} & 50\% & 62\% & &\\[0.5ex] 
\hline \hline
\end{tabular}
\label{Tab:OptRes} 
\end{table}

We also observe that the computation times for instances with the same number of vertices can be significantly different. Consider instances $ckt5\_361$ and $ckt5\_805$. They both have 21 vertices. The optimal solution for $ckt5\_361$ is found in 199.69 seconds while it takes 3459.41 seconds for $ckt5\_805$. Similarly, $ckt5\_144$ and $ckt5\_820$ both have 18 vertices, and computation times are 18.18 and 164.15 seconds respectively. The main factor behind these differences is the structure of the power distribution network in the instances, as shown in Figures \ref{Fig:18} and \ref{Fig:21}. As more vertices branch out (i.e., having multiple immediate successors) and the vertices that branch out are closer to the power source, the computation time of our algorithm significantly increases. This is mainly due to the fact that the upper bounds in instances with fewer vertices branching out are tighter, which keeps the size of the search tree of BiDP small. 

\begin{figure}[H]
\centering
\subfigure[$ckt5\_144$]{%
\includegraphics[height=2in]{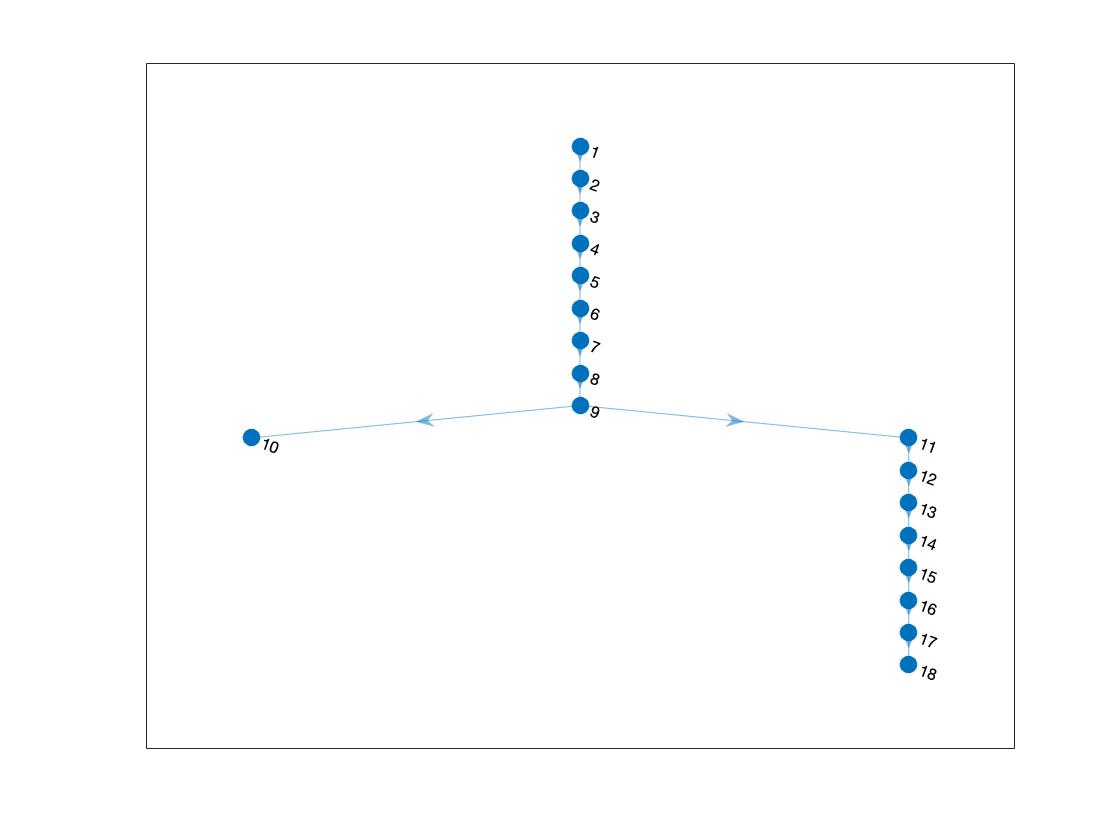}}%
\qquad
\subfigure[$ckt5\_820$]{%
\includegraphics[height=2in]{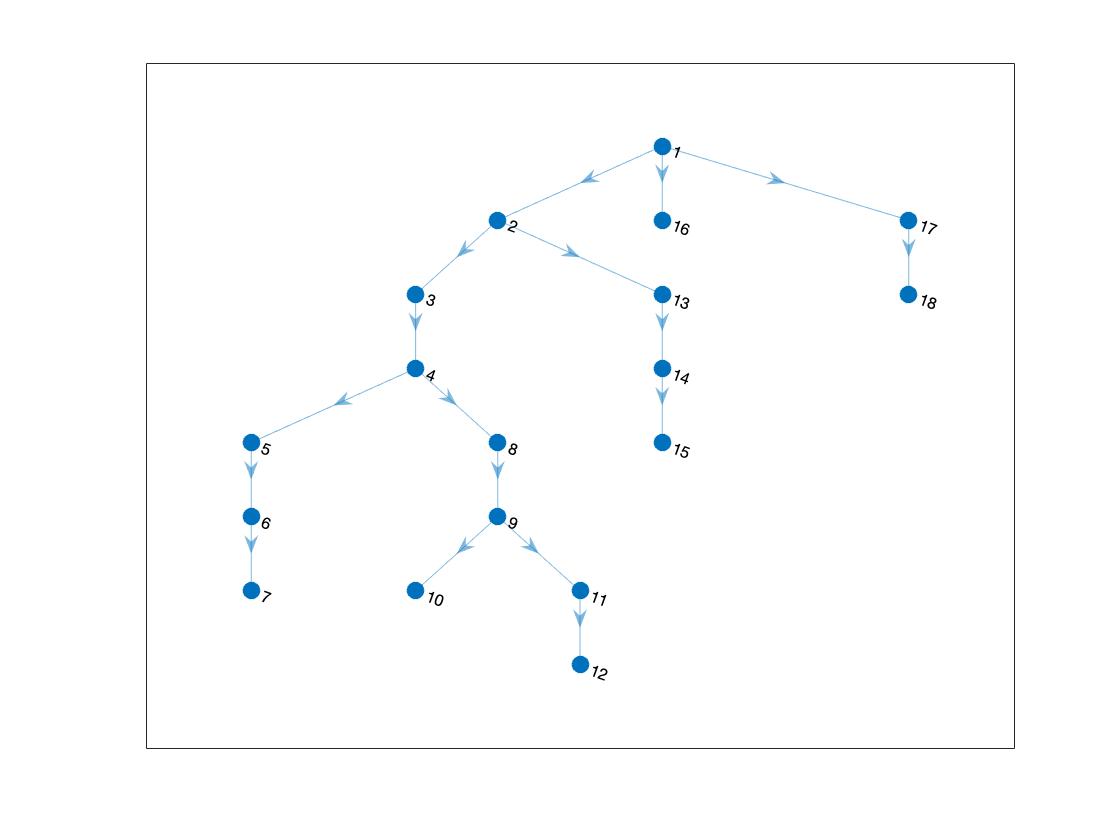}}%
\caption{Power distribution network for instances with 18 vertices.}
\label{Fig:18}
\end{figure}

\begin{figure}[H]
\centering
\subfigure[$ckt5\_361$]{%
\includegraphics[height=2in]{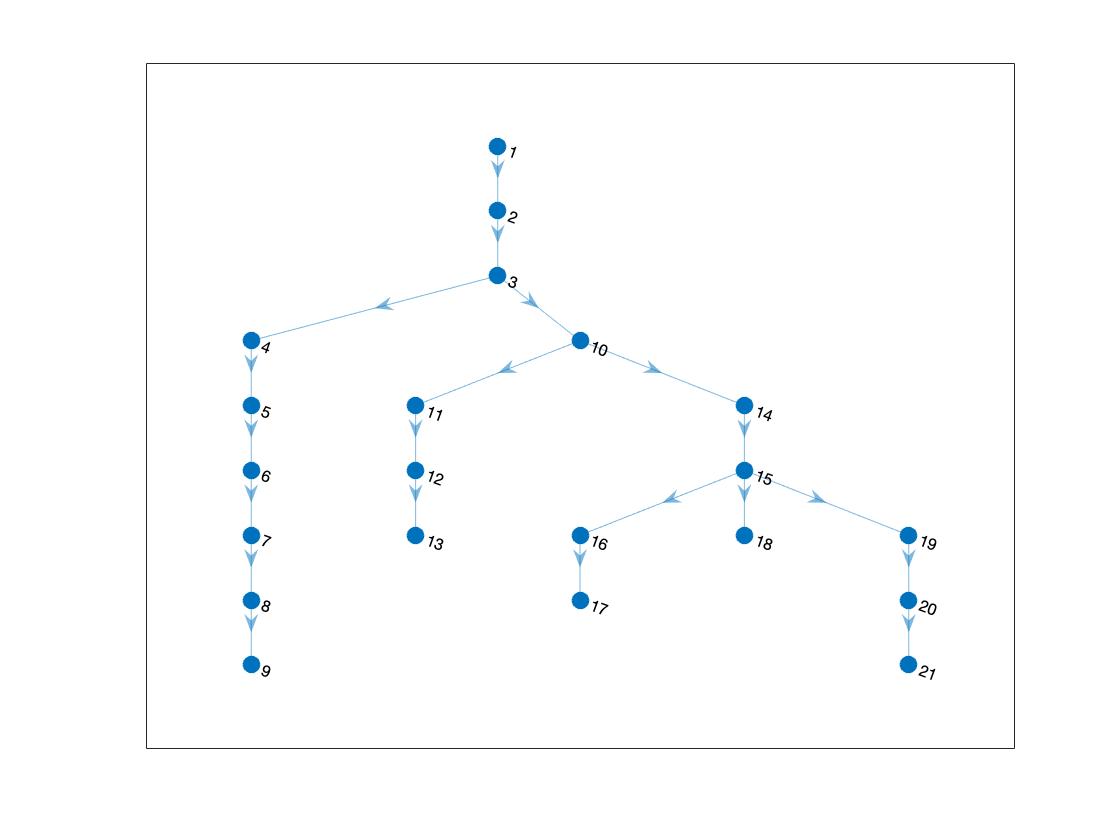}}%
\qquad
\subfigure[$ckt5\_805$]{%
\includegraphics[height=2in]{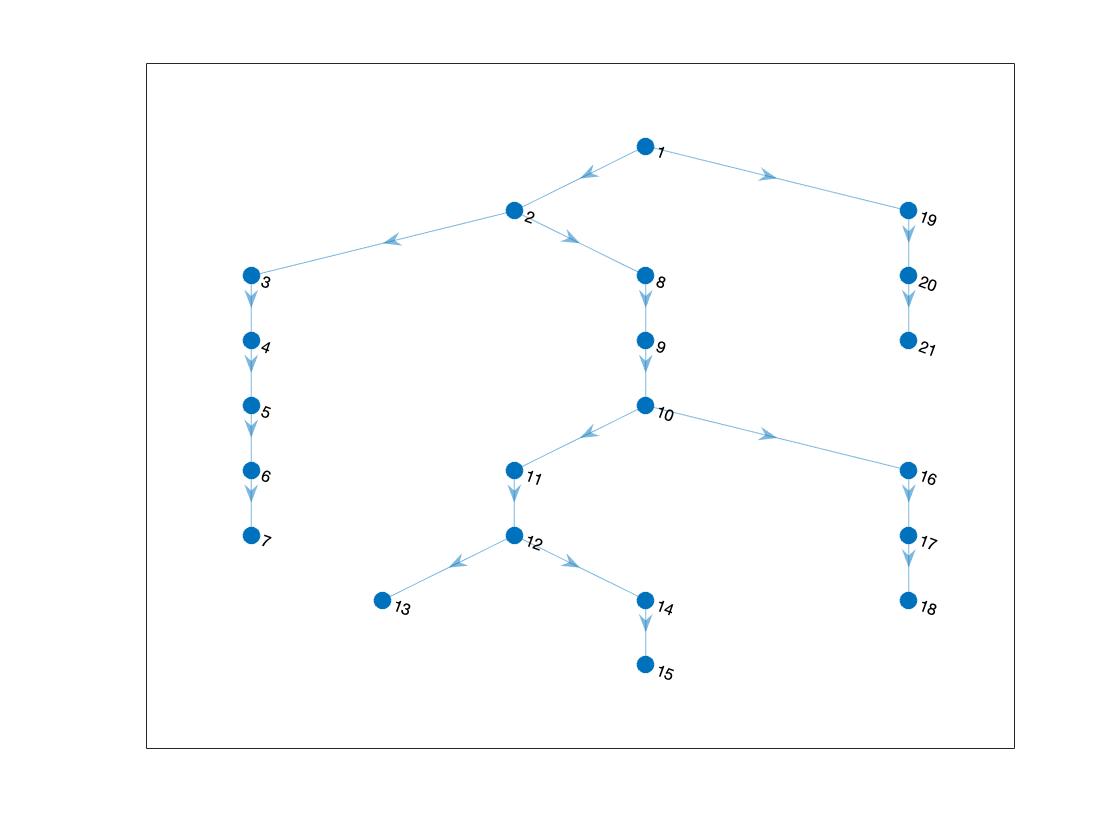}}%
\caption{Power distribution network for instances with 21 vertices.}
\label{Fig:21}
\end{figure}

\subsection*{Heuristic Implementations}
Motivated by the previous observation, we also implement our BiDP algorithm heuristically. In the first setting, we modify the use of the upper bounds. Recall that we initialize the algorithm with an upper bound that is generated by the greedy heuristics, and update the upper bound periodically as we progress with the bi-directional search. The quality of the upper bound improves over time. Therefore, we adjust the upper bound with coefficients $\theta$ and $\delta$. Accordingly, we allow the outgoing and return paths into our search tree if the corresponding lower bound is less than or equal to $(\theta + l \delta)U$, where $l$ is the number of vertices that have been added to the outgoing or return path. Therefore, we start with an aggressive use of the upper bound and relax it as we progress, creating a dynamic heuristic upper bound. For example, for $\theta=0.80$ and $\delta=0.01$, the effective upper bound when the first vertex is added to the outgoing or return path is $0.80U$, and it is $0.87U$ when adding the $8^{th}$ vertex. We report the computational results for $(\theta=0.80, \delta=0.01)$, $(\theta=0.90, \delta=0.01)$, and $(\theta=1.00, \delta=0.00)$ in Table \ref{Tab:HerBiDP1}. Note that the last one corresponds to the exact solution method BiDP. We also report the average, minimum and maximum optimality gap (i.e., percentage deviation from the optimal solution), and the average, minimum and maximum percentage reduction in computation time compared to $t(opt)$ for each ($\theta$, $\delta$) pair. We highlight the cases where we find optimal solutions with the heuristic upper bounds. In the computational results, the average optimality gap is 0.00\% for each ($\theta$, $\delta$) pair. Using $(\theta=0.80, \delta=0.01)$, we find optimal solutions in 19 instances out of 21. In the remaining instances, the deviations from optimal solutions are less than or equal to 1\%. The average percentage reduction in computation time is 68\%, and the maximum reduction is 90\%. Using $(\theta=0.90, \delta=0.01)$, we find the optimal solution in all but one instance with 11\% average reduction in computation time.

\begin{table}[H]
\caption{Solutions found by heuristic implementation of BiDP with dynamic upper bounds, optimal solutions are highlighted ($z$ is in millions). } 
\centering 
\scalefont{0.8}
\begin{tabular}{l |c| r r| r r | r r}
\hline\hline 
& & \multicolumn{2}{|c|}{$\theta$=0.80, $\delta=0.01$} & \multicolumn{2}{|c}{$\theta$=0.90, $\delta=0.01$} & \multicolumn{2}{|c}{$\theta$=1.00, $\delta=0$}  \\ \hline
Instance &$n$ & $z$ & $t(sec)$ & $z$ & $t(sec)$  & $z(opt)$ & $t(sec)$ \\ [0.5ex]
\hline 
$ckt5\_5$	&	13	&	\cellcolor{blue!10}24.81	&	0.32	&	\cellcolor{blue!10}24.81	&	0.66	&	24.81	&	0.84	\\
$ckt5\_630$	&	13	&	\cellcolor{blue!10}26.55	&	0.78	&	\cellcolor{blue!10}26.55	&	1.09	&	26.55	&	1.26	\\
$ckt5\_520$	&	13	&	22.60	&	1.00	&	\cellcolor{blue!10}22.34	&	1.27	&	22.34	&	1.28	\\
$ckt5\_445$	&	15	&	\cellcolor{blue!10}23.00	&	1.39	&	\cellcolor{blue!10}23.00	&	2.32	&	23.00	&	2.86	\\
$ckt5\_513$	&	15	&	\cellcolor{blue!10}28.87	&	1.50	&	\cellcolor{blue!10}28.87	&	2.83	&	28.87	&	3.61	\\
$ckt5\_87$	&	15	&	\cellcolor{blue!10}30.88	&	2.06	&	\cellcolor{blue!10}30.88	&	4.96	&	30.88	&	6.00	\\
$ckt5\_135$	&	15	&	\cellcolor{blue!10}27.97	&	2.43	&	\cellcolor{blue!10}27.97	&	4.47	&	27.97	&	5.36	\\
$ckt5\_375$	&	17	&	\cellcolor{blue!10}40.21	&	15.58	&	\cellcolor{blue!10}40.21	&	39.57	&	40.21	&	45.37	\\
$ckt5\_144$	&	18	&	39.90	&	2.90	&	39.90	&	13.19	&	39.87	&	18.18	\\
$ckt5\_559$	&	18	&	\cellcolor{blue!10}36.28	&	47.71	&	\cellcolor{blue!10}36.28	&	106.89	&	36.28	&	118.41	\\
$ckt5\_820$	&	18	&	\cellcolor{blue!10}35.80	&	50.42	&	\cellcolor{blue!10}35.80	&	148.24	&	35.80	&	164.15	\\
$ckt5\_376$	&	19	&	\cellcolor{blue!10}40.27	&	13.13	&	\cellcolor{blue!10}40.27	&	82.47	&	40.27	&	97.62	\\
$ckt5\_299$	&	20	&	\cellcolor{blue!10}38.84	&	699.58	&	\cellcolor{blue!10}38.84	&	1655.45	&	38.84	&	1647.36	\\
$ckt5\_361$	&	21	&	\cellcolor{blue!10}38.10	&	46.95	&	\cellcolor{blue!10}38.10	&	183.89	&	38.10	&	199.69	\\
$ckt5\_959$	&	21	&   \cellcolor{blue!10}50.88	&	127.48	&	\cellcolor{blue!10}50.88	&	697.30	&	50.88	&	715.72	\\
$ckt5\_732$	&	21	&	\cellcolor{blue!10}40.14	&	214.50	&   \cellcolor{blue!10}	40.14	&	706.06	&	40.14	&	704.39	\\
$ckt5\_805$	&	21	&	\cellcolor{blue!10}50.57	&	812.12	&	\cellcolor{blue!10}50.57	&	3497.89	&	50.57	&	3459.41	\\
$ckt5\_477$	&	22	&	\cellcolor{blue!10}45.81	&	285.61	&	\cellcolor{blue!10}45.81	&	2044.89	&	45.81	&	1850.63	\\
$ckt5\_742$	&	22	&	\cellcolor{blue!10}53.89	&	407.28	&	\cellcolor{blue!10}53.89	&	3068.79	&	53.89	&	2947.84	\\
$ckt5\_285$	&	23	&	\cellcolor{blue!10}44.72	&	463.82	&	\cellcolor{blue!10}44.72	&	3684.37	&	44.72	&	4609.57	\\
$ckt5\_41$	&	23	&	\cellcolor{blue!10}50.05	&	1490.81	&	\cellcolor{blue!10}50.05	&	11591.40	&	50.05	&	15020.91	\\
\hline\hline 
\multicolumn{2}{l|}{Avg. Gap}  & 0.00\% && 0.00\% & & & \\ \hline
 \multicolumn{2}{l|}{Avg. Reduction in $t$ }  & & 68\% && 11\% & &\\
 \multicolumn{2}{l|}{Max Reduction in $t$ }  & & 90\% && 27\% & &\\
 \multicolumn{2}{l|}{Min Reduction in $t$ }  & & 22\% && -10\% & &
 \\
\hline \hline
\end{tabular}
\label{Tab:HerBiDP1} 
\end{table}

In the next heuristic implementation, in addition to the modified upper bounds, we also use a modified maximum position for the power source. After finding greedy solutions with GiP and GiPD, we check the positions of the power source in both solutions, and use the greater position to determine the maximum position for the power source during BiDP. 
Table \ref{Tab:HerBiDP2} presents the corresponding solutions. We repeat the optimal solutions in the table for completeness. Optimal solutions found by the heuristic approaches are highlighted. Using this heuristic implementation, we can still find optimal solutions in almost all instances with 0.00\% average optimality gap. The results corresponding to $(\theta=1.00, \delta=0.00)$ show the impact of the heuristic maximum position only. We observe that by just using the heuristic maximum position, computation times to find optimal solutions decrease by 22\% on average and can be reduced up to 46\%. When heuristic upper bounds for $(\theta=0.80, \delta=0.01)$ are coupled with heuristic maximum position, the computation times to find optimal solutions decrease by 72\% on average and can be reduced up to 93\%.
\begin{table}[H]
\caption{Solutions found by heuristic implementation of BiDP with dynamic upper bounds and a heuristic maximum position, optimal solutions are highlighted ($z$ is in millions). } 
\centering 
\scalefont{0.8}
\begin{tabular}{l |c| r r| r r| r r|r r}
\hline\hline 
& & \multicolumn{2}{|c|}{$\theta$=0.80, $\delta=0.01$} & \multicolumn{2}{|c|}{$\theta$=0.90, $\delta=0.01$} & \multicolumn{2}{|c|}{$\theta$=1.00, $\delta=0$}\\ \hline
Instance &$n$ & $z$ & $t(sec)$ & $z$ & $t(sec)$ & $z$ & $t(sec)$ & $z(opt)$ & $t(sec)$ \\ [0.5ex]
\hline 
$ckt5\_5$	&	13	&	\cellcolor{blue!10}24.81	&	0.32	&	\cellcolor{blue!10}24.81	&	0.52	&	\cellcolor{blue!10}24.81	&	0.70	&	24.81	&	0.84	\\
$ckt5\_630$	&	13	&	\cellcolor{blue!10}26.55	&	0.63	&	\cellcolor{blue!10}26.55	&	0.81	&	\cellcolor{blue!10}26.55	&	0.93	&	26.55	&	1.26	\\
$ckt5\_520$	&	13	&	22.60	&	0.87	&	\cellcolor{blue!10}22.34	&	1.03	&	\cellcolor{blue!10}22.34	&	1.12	&	22.34	&	1.28	\\
$ckt5\_445$	&	15	&	\cellcolor{blue!10}23.00	&	1.17	&	\cellcolor{blue!10}23.00	&	1.82	&	\cellcolor{blue!10}23.00	&	2.27	&	23.00	&	2.86	\\
$ckt5\_513$	&	15	&	\cellcolor{blue!10}28.87	&	1.26	&	\cellcolor{blue!10}28.87	&	2.25	&	\cellcolor{blue!10}28.87	&	2.80	&	28.87	&	3.61	\\
$ckt5\_87$	&	15	&	\cellcolor{blue!10}30.88	&	1.71	&	\cellcolor{blue!10}30.88	&	3.59	&	\cellcolor{blue!10}30.88	&	4.41	&	30.88	&	6.00	\\
$ckt5\_135$	&	15	&	\cellcolor{blue!10}27.97	&	2.47	&	\cellcolor{blue!10}27.97	&	4.45	&	\cellcolor{blue!10}27.97	&	5.28	&	27.97	&	5.36	\\
$ckt5\_375$	&	17	&	\cellcolor{blue!10}40.21	&	12.71	&	\cellcolor{blue!10}40.21	&	26.46	&	\cellcolor{blue!10}40.21	&	29.42	&	40.21	&	45.37	\\
$ckt5\_144$	&	18	&	39.90	&	3.16	&	39.90	&	12.61	&	\cellcolor{blue!10}39.87	&	13.88	&	39.87	&	18.18	\\
$ckt5\_559$	&	18	&	\cellcolor{blue!10}36.28	&	34.30	&	\cellcolor{blue!10}36.28	&	72.86	&	\cellcolor{blue!10}36.28	&	67.54	&	36.28	&	118.41	\\
$ckt5\_820$	&	18	&	\cellcolor{blue!10}35.80	&	54.15	&	\cellcolor{blue!10}35.80	&	148.68	&	\cellcolor{blue!10}35.80	&	163.64	&	35.80	&	164.15	\\
$ckt5\_376$	&	19	&	\cellcolor{blue!10}40.27	&	11.08	&	\cellcolor{blue!10}40.27	&	58.51	&	\cellcolor{blue!10}40.27	&	70.88	&	40.27	&	97.62	\\
$ckt5\_299$	&	20	&	39.87	&	692.24	&	39.87	&	1418.70	&	39.87	&	1402.11	&	38.84	&	1647.36	\\
$ckt5\_361$	&	21	&	\cellcolor{blue!10}38.10	&	44.64	&	\cellcolor{blue!10}38.10	&	149.04	&	\cellcolor{blue!10}38.10	&	156.77	&	38.10	&	199.69	\\
$ckt5\_959$	&	21	&	\cellcolor{blue!10}50.88	&	118.11	&	\cellcolor{blue!10}50.88	&	524.20	&	\cellcolor{blue!10}50.88	&	535.61	&	50.88	&	715.72	\\
$ckt5\_732$	&	21	&	\cellcolor{blue!10}40.14	&	181.21	&	\cellcolor{blue!10}40.14	&	494.20	&	\cellcolor{blue!10}40.14	&	495.83	&	40.14	&	704.39	\\
$ckt5\_805$	&	21	&	\cellcolor{blue!10}50.57	&	537.72	&	\cellcolor{blue!10}50.57	&	2197.78	&	\cellcolor{blue!10}50.57	&	1865.39	&	50.57	&	3459.41	\\
$ckt5\_477$	&	22	&	\cellcolor{blue!10}45.81	&	297.54	&	\cellcolor{blue!10}45.81	&	2195.59	&	\cellcolor{blue!10}45.81	&	1787.22	&	45.81	&	1850.63	\\
$ckt5\_742$	&	22	&	\cellcolor{blue!10}53.89	&	411.80	&	\cellcolor{blue!10}53.89	&	3126.82	&	\cellcolor{blue!10}53.89	&	2928.76	&	53.89	&	2947.84	\\
$ckt5\_285$	&	23	&	\cellcolor{blue!10}44.72	&	340.01	&	\cellcolor{blue!10}44.72	&	2469.44	&	\cellcolor{blue!10}44.72	&	3007.83	&	44.72	&	4609.57	\\
$ckt5\_41$	&	23	&	\cellcolor{blue!10}50.05	&	1087.21	&	\cellcolor{blue!10}50.05	&	7137.93	&	\cellcolor{blue!10}50.05	&	9076.16	&	50.05	&	15020.91	\\
\hline\hline 
\multicolumn{2}{l|}{Avg. Gap}  & 0.00\% && 0.00\% & & 0.00\%&  & &\\ \hline
 \multicolumn{2}{l|}{Avg. Reduction in $t$ }   & & 72\% && 28\% & & 22\%& &\\
  \multicolumn{2}{l|}{Max Reduction in $t$ }   & & 93\% && -19\% & & 0\%& &\\
   \multicolumn{2}{l|}{Min Reduction in $t$ }   & & 32\% && 52\% & & 46\%& &\\
\hline \hline
\end{tabular}
\label{Tab:HerBiDP2} 
\end{table}

\section{Conclusion}
\label{Sec:Conc}
In this paper, we study a routing problem arising in power service restoration, where the repair crew travels to a number of locations to repair power equipment with the goal of minimizing the total power service disruption time. The service disruption time of a location depends on both the travel time on the road network and the topology of the power network, which brings a new layer of complexity to this routing problem. We call this problem the \PRTRP{}. 

The \PRTRP{} is computationally challenging to solve even for small-sized instances. We propose a bi-directional dynamic programming (BiDP) method to find optimal solutions. We develop several techniques to further reduce the search space of the dynamic programming, including pre-computed bounds of the optimal position of a vertex, dynamic lower bounds of optimal objectives of extending a path to Hamiltonian cycle, and upper bounds by two primal heuristics. In addition, we propose heuristic implementations of our BiDP based on dynamic heuristic upper bounds and heuristic bounds for optimal vertex positions. We test our methods on instances created from EPRI instances. Our BiDP method is able to find optimal solutions of instances that cannot be solved by off-the-shelf optimization software. Our heuristic implementations further reduce the computation time significantly while finding optimal solutions for most instances. 

There are several directions to extend our model and methods for future research. Firstly, more computationally efficient algorithms will need to be investigated to support quick decisions under emergency response. Secondly, it's worth considering routing multiple repair crew as power utility companies usually dispatch a fleet of utility trucks during restoration. An added complexity is that the total service disruption time over a route could depend on other routes, while in traditional vehicle routing problems the total travel time or distance of a route only depends on the route itself. Thirdly, service restoration often faces uncertainty in practice, e.g., equipment status and road availability. Taking these uncertainties into account in the model will help generate more robust restoration solutions. Lastly, the problem structure studied in this paper is common in service restoration of other networks, such as communication network or gas network. It would be interesting to see our results to be applied to other infrastructure networks.

\bibliographystyle{plain}

\bibliography{restoration}

\appendix

\section{The lower-bounding lemma}
We first introduce the Rearrangement Inequality and a lower-bounding Lemma that will be used in the proofs of Propositions~\ref{Pro:lower}, \ref{prop:lowerbound_outgoing}, and \ref{prop:lowerbound_return}. 
Given $x \in \Re^n$, let $f(x)$ denote the vector in $\Re^n$ with the components of $x$ rearranged in a non-increasing order and $g(x)$ denote the vector in $\Re^n$ with the components of $x$ rearranged in a non-decreasing order. For example, given $x = (3, 2, 4, 1)$, we have $f(x) = (4, 3, 2, 1)$ and $g(x) = (1, 2, 3, 4)$. Given two vectors $x, y \in \Re^n$, the Rearrangement Inequality~\cite{hardy1952inequalities} states that
\begin{equation} \label{eq:rearrangement}
x \cdot y \ge f(x) \cdot g(y),
\end{equation}
where $x \cdot y = \sum_{i \in [1:n]} x_iy_i$. Given two vectors $x, y \in \Re^n$, we use $x \ge y$ to denote that $x_i \ge y_i$ for $i \in [1:n]$, i.e., vector $x$ is no smaller than vector $y$ component-wise.

We introduce a different way of calculating the total disruption time in the \PRTRP{}. Given a Hamiltonian cycle $H = (j_0=0, j_1, \ldots, j_n, 0)$ over the graph $G$ with $j_1, \ldots, j_n \in V_c$, let $T(H)$ denote the total service disruption time if the repair crew follows $H$. 
Let $c_p(H)$ be the number of vertices with no power right before the repair crew arrives at vertex $j_p$ following $H$, for $p \in [1:n]$. Then we have the following result.
\begin{equation} \label{eq:disruption_time}
    T(H) = \sum_{p \in [1:n]} c_p(H) d_{j_{p-1}j_p}. 
\end{equation}
Equation~\eqref{eq:disruption_time} simply states that the total disruption time following $H$ is to sum up the total service disruption time between visiting two consecutive vertices $j_{p-1}$ and $j_p$ on $H$, which is the product of the number of vertices with no power service $c_p$ and the travel time $d_{j_{p-1}j_p}$.

Let $C(H)=(c_1(H), c_2(H), \ldots, c_n(H))$ and $D(H) = (d_{0j_1}, d_{j_1j_2}, \ldots, d_{j_{n-1}j_n})$. We now introduce the lower-bounding lemma.
\begin{lemma} \label{lemma:lowerbounding}
Given a Hamiltonian cycle $H$ for the \PRTRP{},
\begin{equation} \label{eq:disruption_time:lower_bound}
T(H) \ge C(H) \cdot g(D(H)).
\end{equation}
\end{lemma}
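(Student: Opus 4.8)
The plan is to apply the Rearrangement Inequality~\eqref{eq:rearrangement} to the two vectors $C(H)$ and $D(H)$ appearing in the expression~\eqref{eq:disruption_time} for $T(H)$. Starting from $T(H) = C(H) \cdot D(H)$, the Rearrangement Inequality gives $C(H) \cdot D(H) \ge f(C(H)) \cdot g(D(H))$. So the lemma would follow immediately if we could replace $f(C(H))$ by $C(H)$ on the right-hand side, i.e., if we could show $C(H) \cdot g(D(H)) \le f(C(H)) \cdot g(D(H))$. This would be true if $C(H)$ were already sorted in non-increasing order, because then $f(C(H)) = C(H)$.

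First I would argue that the sequence $c_1(H), c_2(H), \ldots, c_n(H)$ is non-increasing. The quantity $c_p(H)$ counts the vertices still without power right before the crew arrives at $j_p$; as the crew repairs more faults, the number of powerless vertices can only stay the same or decrease, since repairing a fault never removes power from any vertex (it can only restore power once all predecessors on the path from the source are fixed). Hence $c_{p+1}(H) \le c_p(H)$ for every $p \in [1:n-1]$, so $C(H)$ is non-increasing and $f(C(H)) = C(H)$.

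Then I would chain the two facts: $T(H) = C(H) \cdot D(H) \ge f(C(H)) \cdot g(D(H)) = C(H) \cdot g(D(H))$, which is exactly~\eqref{eq:disruption_time:lower_bound}. Writing this out carefully: the first equality is~\eqref{eq:disruption_time}, the inequality is the Rearrangement Inequality~\eqref{eq:rearrangement} applied with $x = C(H)$ and $y = D(H)$, and the last equality uses the monotonicity of $C(H)$ established above.

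The only real obstacle is making the monotonicity claim for $C(H)$ fully rigorous — one has to be a bit careful that repairing a fault at $j_p$ cannot increase the powerless count, which is clear from the definition of $r_i$ in the problem (a vertex recovers power only when all faults on its path from the source are removed, so recoveries are monotone in the set of repaired vertices). Once that is pinned down, the rest is a direct invocation of the Rearrangement Inequality with no further computation.
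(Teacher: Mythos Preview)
Your proposal is correct and follows essentially the same argument as the paper: write $T(H)=C(H)\cdot D(H)$, apply the Rearrangement Inequality to get $f(C(H))\cdot g(D(H))$, and then use that $C(H)$ is already non-increasing so $f(C(H))=C(H)$. The paper simply asserts this monotonicity of $C(H)$ without justification, so your added explanation of why repairing a fault can only (weakly) decrease the number of powerless vertices is a welcome clarification rather than a deviation.
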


\begin{proof}
We have
\[
T(H) = C(H) \cdot D(H) \ge f(C(H)) \cdot g(D(H)) = C(H) \cdot g(D(H)),
\]
where the first equality follows from Equation~\eqref{eq:disruption_time}, the first inequality follows from the Rearrangement Inequality~\eqref{eq:rearrangement}, and the second equality follows from the fact that the components of $C$ are already sorted in a non-increasing order. 
\end{proof}

\section{Proof of Proposition \ref{Pro:lower}} \label{App:lower}
\begin{proof}[Proof of Proposition \ref{Pro:lower}]
According to Lemma~\ref{lemma:lowerbounding}, for any Hamiltonian cycle $H$, we have
\[T(H) \ge C(H) \cdot g(D(H)).\]
We will find vectors $\bar{C}$ and $\bar{D}$ such that $C(H) \ge \bar{C}$ and $g(D(H)) \ge \bar{D}$ respectively. 


We first construct a lower bound of the vector $C(H)$. Consider any Hamiltonian cycle where vertex $i$ is visited at position $k$ with $k > n-|S_i|$, i.e., $(j_0=0, j_1, \dots, j_{k-1}, i, j_{k+1}, \dots, j_n)$. Since $k>n-|S_i|$, the positions on this Hamiltonian cycle can be divided into three groups as follows: 
\begin{enumerate}
\item[(i)] $p\in [1:n-|S_i|]$, 
\item[(ii)] $p \in [n-|S_i|+1:k]$,
\item[(iii)] $p\in [k+1:n]$.
\end{enumerate}
For the positions (i) $p\in [1:n-|S_i|]$, with each repair, at most one additional vertex can be recovered. This is possible since vertices can be selected from the set $V\setminus|S_i|$, and the repair order can follow the precedence relation on the power network. For positions (ii) $p \in [n-|S_i|+1:k]$, no additional vertices can receive service because all remaining vertices are successors of vertex $i$ or vertex $i$ itself. Finally, for positions (iii) $p \in [k+1,n]$, one additional vertex can be recovered, similar to (i). Accordingly, the total number of vertices that do not have service for each arc on the HC corresponds to a vector $\bar{C}=(c_1=n, c_2=n-1, n-2, \dots, |S_i|+1, |S_i|, \dots, |S_i|,|S_i|-1, |S_i|-2, \dots, c_n=1 )$. Vector $\bar{C}$ represents the best-case scenario for reducing the number of vertices without power service given that vertex $i$ is visited at position $k$. This implies that $C(H) \ge \bar{C}$.

Now we construct a lower bound of the vector $g(D(H))$. Recall that $s_p$ is the length of the $p^{th}$ shortest arc in $A$. Consider the vector $\bar{D}=(s_1, s_2, \dots, s_n)$. Since each component of $g(D(H))$ is the length of some arc in $A$ and the components of $g(D(H))$ is sorted in a non-increasing order. We have $g(D(H)) \ge \bar{D}$. Combining the facts that $C(H) \ge \bar{C}$ and $g(D(H)) \ge \bar{D}$ and~\eqref{eq:disruption_time:lower_bound}, we have
\[T(H) \ge \bar{C} \cdot \bar{D} = \sum_{\substack{p : p \leq n-|S_i| \\ p\geq k+1}}(n-p+1)s_p + |S_i|\sum_{p=n-|S_i|+1}^{k}s_p = L^k_i.\]
 \end{proof}

\section{Proof of Proposition~\ref{prop:lowerbound_outgoing}}
\label{App:lowerbound_outgoing}
\begin{proof}[Proof of Proposition~\ref{prop:lowerbound_outgoing}]
The proof is similar to the proof of Proposition~\ref{Pro:lower}. Since path $P$ has $k$ vertices, we can denote any Hamiltonian cycle $H$ that is extended from $P$ by $H = P \oplus Q$, where $Q$ is a return path that starts at the end vertex of path $P$ and contains $n-k$ remaining vertices in $V_c$. Let $Q = (j_{k+1}, \ldots, j_{n}, j_{n+1}=0)$. Let $T(H)$ denote the total service disruption time of $H$. Let $c_p$ be the number of vertices with no power right before the repair crew arrives at vertex $j_p$ for $p \in [1 : n]$. Let $C=(c_{k+1}, \ldots, c_n)$ and $D = (d_{j_kj_{k+1}}, \ldots, d_{j_{n-1}j_n})$. Then we have
\begin{equation*}
    \begin{split}
        T(H) & = u^F(P) + \sum_{p=k+1}^n c_p d_{j_{p-1}j_p} \\
             & = u^F(P) + C \cdot D \\
             & \ge u^F(P) + f(C) \cdot g(D) \qquad (\text{The Rearrangement Inequality}) \\
             & = u^F(P) + C \cdot g(D) \qquad (\text{The components of $C$ are non-increasing}).
    \end{split}
\end{equation*}

Now we try to find vectors $\bar{C}$ and $\bar{D}$ such that $C \ge \bar{C}$ and $g(D) \ge \bar{D}$ respectively. Note that $c_{k+1} = w_P$, since there are $w_P$ vertices with no power service before vertex $j_{k+1}$ is visited. We have $c_{k+2} \ge n-k-1$, since there at least $n-k-1$ vertices without power services before vertex $j_{k+2}$ is visited, i.e., vertices $j_{k+2}, \ldots, j_n$. Similarly $c_{k+3} \ge n-k-2$, $\ldots$, and $c_n \ge 1$. Thus $C \ge \bar{C} = (w_P, n-k-1, \ldots, 1)$. On the other hand, recall that $s_p$ is the length of the $p^{th}$ shortest arc in $A$. Then $g(D) \ge \bar{D} = (s_1, s_2, \ldots, s_{n-k})$. Therefore, we have
\begin{equation*}
    \begin{split}
        T(H) & \ge u^F(P) + \bar{C} \cdot \bar{D} \\
             & = u^F(P) + w_P s_1 + (n-k-1)s_2 + \ldots + 1 \times s_{n-k} \\
             & = u^F(P) + w_P s_1 + \sum_{p=2}^{n-k} (n-k+1-p)s_p.
    \end{split}
\end{equation*}
\end{proof}

\section{Proof of Proposition~\ref{prop:lowerbound_return}}
\label{App:lowerbound_return}
\begin{proof}[Proof of Proposition~\ref{prop:lowerbound_return}]
The proof is similar to the Proof of Proposition~\ref{prop:lowerbound_outgoing}. Since path $P$ has $k$ vertices, we can denote any Hamiltonian cycle $H$ that is extended from $P$ by $H = Q \oplus P$, where $Q$ is an outgoing path that ends at the starting vertex of path $P$ and contains $n-k$ vertices in $V_c$. Let $Q = (j_0=0, j_1, \ldots, j_{n-k})$. Let $T(H)$ denote the total service disruption time of $H$. Let $c_p$ be the number of vertices with no power right before the repair crew arrives at vertex $j_p$ for $p \in [1 : n]$. Let $C = (c_1, \ldots, c_{n-k+1})$ and $D = (d_{j_0j_1}, \ldots, d_{j_{n-k}j_{n-k+1}})$.
\begin{equation*}
    \begin{split}
        T(H) & = \sum_{p=1}^{n-k+1} c_p d_{j_{p-1}j_p} + v^B(P) \\
             & = C \cdot D + v^B(P) \\
             & \ge f(C) \cdot g(D) + v^B(P) \qquad (\text{The Rearrangement Inequality}) \\
             & = C \cdot g(D) + v^B(P)  \qquad (\text{The components of $C$ are non-increasing}).
    \end{split}
\end{equation*}

Now we try to find vectors $\bar{C}$ and $\bar{D}$ such that $C \ge \bar{C}$ and $g(D) \ge \bar{D}$ respectively. Note that $c_p \ge n-p+1$ since there are at least $n-p+1$ vertices not visited before vertex $j_p$ is visited following $H$, so $C \ge \bar{C} = (n, n-1, \ldots, k)$. On the other hand, recall that $s_p$ is the length of the $p^{th}$ shortest arc in $A$. Then $g(D) \ge \bar{D} = (s_1, s_2, \ldots, s_{n-k})$. Therefore, we have 
\begin{equation*}
    \begin{split}
        T(H) & \ge \bar{C} \cdot \bar{D} + v^B(P) \\
             & = n s_1 + (n-1) s_2 + \ldots + k s_{n-k+1} + v^B(P).
    \end{split}
\end{equation*}
\end{proof}
\end{document}